\newtheorem{proposition}{Proposition}[section]
\newtheorem{theorem}[proposition]{Theorem}
\newtheorem{lemma}[proposition]{Lemma}
\newtheorem{prop}[proposition]{Proposition}
\newtheorem{cor}[proposition]{Corollary}
\newtheorem{conj}[proposition]{Conjecture}
\newtheorem{obs}{Observation}
\theoremstyle{definition}
\theoremstyle{remark}
\numberwithin{equation}{section}
\newcommand{\newword}[1]{\textbf{\emph{#1}}}
\newcommand{\integers}{\mathbb Z}
\newcommand{\reals}{\mathbb R}
\newcommand{\complexes}{\mathbb C}
\newcommand{\set}[1]{{\lbrace #1 \rbrace}}
\renewcommand{\c}{{\mathbf c}}
\renewcommand{\d}{{\mathbf d}}
\newcommand{\g}{{\mathbf g}}
\newcommand{\m}{{\mathbf m}}
\renewcommand{\th}{^\mathrm{th}}
\newcommand{\A}{\mathcal{A}}
\newcommand{\T}{\mathbb{T}}
\newcommand{\dashname}[1]{\stackrel{#1}{\begin{picture}(22,3)\put(0,2.5){\line(1,0){22}}\end{picture}}}
\newcommand{\ck}{^{\vee\!}}
\newcommand{\sgn}{\operatorname{sgn}}
\begin{document}

\title[Initial-seed recursions and dualities for $\d$-vectors]{Initial-seed recursions and dualities\\ for $\d$-vectors}
\author{Nathan Reading and Salvatore Stella}
\address[Nathan Reading]{North Carolina State University, Raleigh, NC, USA}
\email{reading@math.ncsu.edu}
\address[Salvatore Stella]{Department of Mathematics \& Department of Computer Science, University of Haifa, Haifa, Mount Carmel 31905, Israel}
\email{stella@mat.uniroma1.it}
\thanks{Nathan Reading was partially supported by NSF grant DMS-1101568.}
\keywords{cluster algebra, denominator vector, marked surface}
\subjclass[2010]{13F60}

\begin{abstract}
  We present an initial-seed-mutation formula for $\d$-vectors of cluster variables in a cluster algebra.
  We also give two rephrasings of this recursion: one as a duality formula for $\d$-vectors in the style of the $\g$-vectors/$\c$-vectors dualities of Nakanishi and Zelevinsky, and one as a formula expressing the highest powers in the Laurent expansion of a cluster variable in terms of the $\d$-vectors of any cluster containing it.
	We prove that the initial-seed-mutation recursion holds in a varied collection of cluster algebras, but not in general.
  We conjecture further that the formula holds \emph{for source-sink moves on the initial seed} in an arbitrary cluster algebra, and we prove this conjecture in the case of surfaces.
\end{abstract}
\maketitle

\setcounter{tocdepth}{1}
\tableofcontents

\section{Introduction}\label{intro}

This paper concerns the search for an \newword{initial-seed recursion} for $\d$-vectors: a recursive formula for how $\d$-vectors change under mutation of initial seeds.
We begin this introduction by providing background on cluster algebras, seeds, and $\d$-vectors.

The origins of cluster algebras lie in the study of totally positive matrices, generalized by Lusztig \cite{Lusztig} to a notion of totally positive elements in any reductive group.
Indeed, the recursive definition of cluster algebras extends and generalizes a recursion on minimal sets of minors whose positivity implies total positivity of matrices.
Cluster algebras were introduced by Fomin and Zelevinsky \cite{dbc,ca1}, who conjectured that the coordinate ring of any \textit{double Bruhat cell} (i.e. any intersection of two Bruhat cells for opposite Borel subgroups) is a cluster algebra.
(As it turns out, the natural choice of cluster algebra is a subring of the double Bruhat cell, proper in some cases.
In general, the double Bruhat cell coincides with a related larger algebra called an \textit{upper cluster algebra} \cite{ca3}.)

Since their introduction, cluster algebras and/or their underlying combinatorics and geometry have been found in widely different settings.
Some of these settings---and some early references---are algebraic geometry (Grassmannians \cite{Scott} and tropical analogues \cite{SpWi}), discrete dynamical systems (rational recurrences \cite{CaSp,FZLP}), higher Teichm\"{u}ller theory \cite{FG1,FG2}, PDE (KP solitons \cite{KW1,KW2}), Poisson geometry \cite{GSV1,GSV2}, representation theory of quivers/finite dimensional algebras \cite{BMRRT,BMRT,CCS,CK,MRZ}, scattering diagrams \cite{GHK,GHKK,KS} (related to mirror symmetry, Donaldson-Thomas theory, and integrable systems, and string theory),  and $Y$-systems in thermodynamic Bethe Ansatz \cite{ga}.

We begin by reviewing the definition of a (coefficient free) \newword{cluster algebra}.
An \newword{exchange matrix} $B=(b_{ij})$ is a skew-symmetrizable $n\times n$ integer matrix (meaning that there exist positive integers $d_i$ such that  $d_ib_{ij} = -d_jb_{ji}$ for every $i$ and $j$).
We write $\T_n$ for the $n$-regular tree with edges properly labeled $1,\ldots,n$, and we distinguish one vertex $t_0$ as the ``initial'' vertex.
We will write $t\dashname{k} t'$ to indicate that $t$ and $t'$ are connected by an edge labeled $k$.
We define a function $t\mapsto B_t$ that labels each vertex of $\T_n$ with an exchange matrix.
Specifically, we set $B_{t_0}$ equal to some ``initial'' exchange matrix $B_0$ and, for each edge $t\dashname{k} t'$ with $B_t=(b_{ij})$, we insist that $B_{t'}=(b'_{ij})$ be given by
\begin{equation}
  \label{b mut}
  b_{ij}'=\left\lbrace\!\!\begin{array}{ll}
    -b_{ij}&\mbox{if }i=k\mbox{ or }j=k;\\
    b_{ij}+\sgn(b_{kj})\,[b_{ik}b_{kj}]_+&\mbox{otherwise.}
  \end{array}\right.
\end{equation}
Here and elsewhere in the text, the notation $[a]_+$ means $\max(a,0)$ while $\sgn(a)$ is the sign of $a$.

Taking $x_1,\ldots,x_n$ to be indeterminates, we also label each vertex $t$ of $\T_n$ with an $n$-tuple $(x_{1;t},\ldots,x_{n;t})$ of rational functions in $x_1,\ldots,x_n$ called \newword{cluster variables}.
The label on $t_0$ consists of the indeterminates: $x_{i;t_0}=x_i$ for all $i$.
The remaining cluster variables are prescribed by \newword{exchange relations}.
For each edge $t\dashname{k} t'$, we have $x_{i;t'}=x_{i;t}$ for all $i\neq k$ and
\begin{equation}
  \label{x mut}
  x_{k;t}x_{k;t'}=\prod_{i=1}^nx_{i;t}^{[b_{ik}]_+}+\prod_{i=1}^nx_{i;t}^{[-b_{ik}]_+},
\end{equation}
where the $b_{ik}$ are entries of $B_t$.

Each pair $(B_t,(x_{1;t},\ldots,x_{n;t}))$ is called a \newword{seed}.
When $t$ and $t'$ are connected by an edge $t\dashname{k} t'$, the relationship between the seeds $(B_t,(x_{1;t},\ldots,x_{n;t}))$ and $(B_{t'},(x_{1;t'},\ldots,x_{n;t'}))$ is called \newword{mutation in direction $k$}.
The (coefficient-free) cluster algebra $\A(B_0)$ associated to the initial exchange matrix $B_0$ is the algebra (a subalgebra of the field of rational functions in $x_1,\ldots,x_n$) generated by the set $\set{x_{i;t}:\,t\in\T_n,\,i=1,\ldots,n}$ of all cluster variables.
Typically, there are infinitely many cluster variables;  when the set $\set{x_{i;t}:\,t\in\T_n,\,i=1,\ldots,n}$ is finite, we say that $B_0$ is of \newword{finite type}.

The first fundamental result on cluster algebras is the \newword{Laurent Phenomenon} \cite[Theorem~3.1]{ca1}.
The exchange relations define the cluster variables as rational functions in $x_1,\ldots,x_n$.
The Laurent Phenomenon is the assertion that each cluster variable is in fact a Laurent polynomial (a polynomial divided by a monomial).
This implies in particular that each cluster variable has a \newword{denominator vector} or \newword{$\d$-vector}.
The $\d$-vector of $x_{i;t}$ is a vector $\d_{j;t}$ with $n$ entries, whose $j\th$ entry is the power of $x_j^{-1}$ that appears as a factor of $x_{i;t}$.
In principle, the $\d$-vector may have negative entries (when powers of $x_j$ appear in the numerator of $x_{i;t}$), but in practice this only happens when $x_{i;t}$ equals some $x_j$.

Denominator vectors are fundamental to the theory of cluster algebras in many ways, and they are also significant in other settings beginning with Fomin and Zeleivinsky's proof \cite{ga} of Zamolodchikov's periodicity conjecture on $Y$-systems in the theory of thermodynamic Bethe ansatz.
They are also important in representation theory.
Each skew-symmetric $n\times n$ exchange matrix $B$ defines a \textit{quiver} (i.e.\ a directed graph) $Q$ on the vertices $1,\ldots,n$.
(The signs of entries give the direction of arrows and the magnitudes of entries give multiplicities of arrows.)
In the case where $B$ is skew-symmetric and acyclic, the $\d$-vectors of cluster variables are exactly the dimension vectors of \textit{rigid} indecomposable modules over the path algebra of $Q$ (modules with no self-extensions).
(See \cite{BMRT,CCS}.)
In combinatorics, the $\d$-vectors, realized as \textit{almost positive roots} in an associated root system, are central to the structure of \textit{generalized associahedra} and thus play a role in Coxeter-Catalan combinatorics \cite{Armstrong,rsga} and are interesting in more general settings such as subword complexes, multiassociahedra, graph associahedra, and so forth.

Once we know the Laurent Phenomenon, the exchange relations \eqref{x mut} imply a recursion on $\d$-vectors $\d_{j;t}$, given later as \eqref{usual D}.
This recursion is a ``final-seed recursion'' because it describes how $\d$-vectors (computed with respect to a fixed \textit{initial} seed) change when we mutate the \textit{final} seed $(B_t,(x_{1;t},\ldots,x_{n;t}))$.

We are now prepared to discuss the search for an initial-seed recursion for $\d$-vectors, describing how $\d$-vectors at a fixed \emph{final} seed change under mutation of \emph{initial} seeds.
It is widely expected (see e.g.\ \cite[Remark~7.7]{ca4}) that no satisfactory initial-seed-mutation recursion holds in general, and indeed we do not produce one.
However, a very nice initial-seed-mutation recursion holds in a varied collection of cluster algebras (including the case considered in \cite[Remark~7.7]{ca4}).
This recursion turns out to be equivalent to a beautiful duality formula in the style of the $\g$-vectors/$\c$-vectors dualities of Nakanishi and Zelevinsky \cite{N,NZ}.

The first thing one notices when looking for such a recursion is that, to understand how denominators change when the initial seed is mutated, one must know something about a related family of integer vectors.
Specifically, if $(x_1,\ldots,x_n)$ is the initial cluster, then the \emph{negation} of the $\d$-vector of a cluster variable $x$ is the vector of lowest powers of the $x_i$ occurring in the expression for $x$ as a Laurent polynomial in $x_1,\ldots,x_n$.
We define the \newword{$\m$-vector} of $x$ to be the vector of \emph{highest} powers of the $x_i$ occurring in $x$.
Our initial-seed-mutation recursion for $\d$-vectors is equivalent to a description of the $\m$-vectors in a given cluster in terms of the $\d$-vectors in the same cluster.

In many cases, one can establish the three formulas \eqref{DD}--\eqref{MD} by reading off the duality directly from expressions for denominator vectors found in the literature \cite{CP,FST,LLZ}.
In particular, all of them hold in finite type, in rank two (i.e.\ $n=2$), and more intriguingly, in nontrivial examples arising from marked surfaces.

We conjecture that the initial-seed-mutation recursion holds in the case of source-sink moves in arbitrary cluster algebras.
We prove this conjecture for cluster algebras arising from surfaces.
Dylan Rupel \cite{Dylan} has proved the conjecture in the case where $B$ is acyclic, using a categorification of quantum cluster algebras.

Besides their usefulness in understanding denominator vectors, the $\m$-vectors may be of independent interest.
A major goal in the study of cluster algebras is to give explicit formulas for the cluster variables.
Work in this direction includes realizing cluster variables as ``lambda lengths'' in the surfaces case \cite{FT}, combinatorial formulas in rank two \cite{LS}, in some finite types \cite{Mus,Sch}, and for some surfaces \cite{MS,MSW,ST}, interpretations in terms of the representation theory of quivers, beginning with \cite{CC}, and formulas in terms of ``broken lines'' in scattering diagrams~\cite{GHKK}.
Short of a complete description of a cluster variable, one might instead describe its Newton polytope (the convex hull of the exponent vectors of the Laurent monomials occurring in its Laurent expansion).
However, as far as the authors are aware, there are no general results describing Newton polytopes.
(For a description in one finite-type case, see \cite{Kal}.)

Together, the $\d$-vectors and $\m$-vectors amount to coarse information about Newton polytopes, namely their ``bounding boxes.'' Given a polytope $P$ in $\reals^n$, define the \newword{tight bounding box} of $P$ to be the smallest box $[a_1,b_1]\times\cdots\times[a_n,b_n]$ containing $P$.
(Readers who pay attention to bounding boxes of graphics files will   find the notion familiar.)
Equivalently, for each $i=1,\ldots,n$, the values $a_i$ and $b_i$ are respectively the minimum and maximum of the $i\th$ coordinates of points in~$P$.
It is convenient to describe the tight bounding box by specifying the vectors $(a_1,\ldots,a_n)$ and $(b_1,\ldots,b_n)$.
The tight bounding box of the Newton polytope of a Laurent polynomial $f$ in $x_1,\ldots,x_n$ is $[a_1,b_1]\times\cdots\times[a_n,b_n]$ such that $a_i$ is the lowest power of $x_i$ occurring in any Laurent monomial of $f$, and $b_i$ is the highest power of $x_i$ occurring.
Thus when $x$ is a cluster variable written as a Laurent polynomial in the initial cluster $(x_1,\ldots,x_n)$, the tight bounding box of the Newton polytope of $x$ is given by the negation of the $\d$-vector and by the $\m$-vector.

\section{Results}
Our notation is in the spirit of \cite{ca4} and \cite{NZ}.
As before, the notation $[a]_+$ means $\max(a,0)$.
We will apply the operators $\max$, $|\,\cdot\,|$ , and $[\,\cdot\,]_+$ entry-wise to vectors and matrices.
We continue to write $\T_n$ for the $n$-regular tree with edges properly labeled $1,\ldots,n$.
Symbols like $t$, $t_0$, $t'$, etc.\ will stand for vertices of $\T_n$.
The notation $t\dashname{k} t'$ indicates an edge in $\T_n$ labeled $k$.
In what follows, the initial seed is allowed to vary, so we need to be able to indicate the initial seed as part of the notation.
Thus, the notation $B_t^{B_0;t_0}$ stands for the exchange matrix at $t$, where $B_0$ is the exchange matrix at $t_0$.
Similarly, $x_{j;t}^{B_0;t_0}$ stands for the (coefficient-free) cluster variable indexed by $j$ in the (labeled) seed at $t$, and $\d_{j;t}^{B_0;t_0}$ is the denominator vector of $x_{j;t}^{B_0;t_0}$ with respect to the cluster at~$t_0$.

Given a matrix $A$, let $A^{\bullet k}$ be the matrix obtained from $A$ by replacing all entries outside the $k\th$ column with zeros.
Similarly, $A^{k \bullet}$ is obtained by replacing entries outside the $k\th$ row with zeros.
Let $J_k$ be the matrix obtained from the identity matrix by replacing the $kk$-entry by~$-1$.
The superscript $T$ stands for transpose.

We fix $(x_1,\ldots,x_n)$ to be the initial cluster (the cluster at $t_0$).
We write $D_t^{B_0;t_0}$ for the matrix whose $j\th$ \emph{column} is $\d_{j;t}^{B_0;t_0}$ and $D_{ij;t}^{B_0;t_0}$ for the $ij$-entry of that matrix.
Each $x_{j;t}^{B_0;t_0}$ is a Laurent polynomial in $x_1,\ldots,x_n$.
(This is the Laurent Phenomenon, \cite[Theorem~3.1]{ca1}.)
Let $M_t^{B_0;t_0}$ be the matrix whose $ij$-entry $M_{ij;t}^{B_0;t_0}$ is the maximum, over all of the (Laurent) monomials in $x_{j;t}^{B_0;t_0}$, of the power of $x_i$ occurring in the monomial.
Write $\m_{j;t}^{B_0;t_0}$ for the $j\th$ \emph{column} of $M_t^{B_0;t_0}$ and call this the $j\th$ \newword{$\m$-vector} at $t$.

We now present a duality property for denominator vectors that holds in some cluster algebras, as well as two equivalent properties: an initial-seed-mutation recursion for denominator vectors and a formula for the $M$-matrix at a given seed in terms of the $D$-matrix at the same seed.

\medskip

\noindent
\textbf{Property D.} ($D$-matrix duality).
For vertices $t_0,t\in\T_n$, writing $B_t$ as shorthand for $B_t^{B_0;t_0}$,
\begin{equation}\label{DD}
  \bigl(D_t^{B_0;t_0}\bigr)^T=D_{t_0}^{(-B_t)^T;t}.\\
\end{equation}

\medskip

\noindent
\textbf{Property R.} (Initial-seed-mutation recursion for $D$-matrices).
Suppose $t_0\dashname{k}t_1$ is an edge in $\T_n$ and write $B_1$ for $\mu_k(B_0)$.
Then
\begin{equation}
  \label{D backwards}
  D_t^{B_1;t_1}= J_kD_t^{B_0;t_0}+\max\left([B_0^{k\bullet}]_+D_t^{B_0;t_0},\,\, [-B_0^{k\bullet}]_+D_t^{B_0;t_0} \right)\\
\end{equation}

The recursion in Property R is not on individual denominator vectors, but rather on an entire cluster of denominator vectors.
For $i\neq k$, the $i\th$ entry of each denominator vector is unchanged, while row $k$ of the $D$-matrix (the vector of $k\th$ entries in denominator vectors) transforms by a recursion similar to the usual recursion (equation \eqref{usual D}, below) for how denominator vectors change under mutation.

\medskip

\noindent
\textbf{Property M.} ($M$-matrices in terms of $D$-matrices).
For vertices $t_0,t\in\T_n$,
\begin{equation}
  \label{MD}
  M_t^{B_0;t_0}=-D_t^{B_0;t_0}+\max\left([B_0]_+D_t^{B_0;t_0},\,\, [-B_0]_+D_t^{B_0;t_0} \right)\\
\end{equation}

\medskip

When Property M holds, in particular, the entire tight bounding box of a cluster variable $x$ can be determined directly from the denominator vectors of any cluster containing $x$.

Our first main result is the following theorem, which we prove in Section~\ref{general sec}.

\begin{theorem}
  \label{DRM}
	Fix a (coefficient-free) cluster pattern $t\mapsto(B^{B_0;t_0}_t,(x_{1;t},\ldots,x_{n;t}))$.
  The following are equivalent:
  \begin{enumerate}
    \item Property D holds for all $t_0$ and $t$.
    \item Property R holds for all $t_0$, $t$, and $k$.
    \item Property M holds for all $t_0$ and $t$.
  \end{enumerate}
\end{theorem}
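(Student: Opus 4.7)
The plan is to pass through two general identities relating $\m$- and $\d$-vectors across a single initial-seed-mutation, and then combine these with the standard final-seed-mutation recursion~\eqref{usual D} for $\d$-vectors. Fix an edge $t_0\dashname{k}t_1$ in $\T_n$ and write $B=\prod_i x_i^{[b_{ik}]_+}+\prod_i x_i^{[-b_{ik}]_+}$ for the exchange binomial, so that $x_{k;t_0}\,x_{k;t_1}=B$. The first and most substantial step is to establish, unconditionally, that for every $t$,
\begin{equation*}
D_{kj;t}^{B_1;t_1}=M_{kj;t}^{B_0;t_0}
\qquad\text{and}\qquad
D_{ij;t}^{B_1;t_1}=D_{ij;t}^{B_0;t_0}\text{ for all }i\neq k.
\end{equation*}
For the first identity, substitute $x_{k;t_0}=B/x_{k;t_1}$ into the Laurent expansion of $x_{j;t}$ at $t_0$: the monomials of maximum $x_{k;t_0}$-exponent $M_{kj;t}^{B_0;t_0}$ are the only source of the power $x_{k;t_1}^{-M_{kj;t}^{B_0;t_0}}$, and their total contribution is a nonzero Laurent polynomial in the other initial variables, so no cancellation is possible. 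For the second identity, the same substitution combined with the Laurent phenomenon yields $D_{ij;t}^{B_1;t_1}\leq D_{ij;t}^{B_0;t_0}$, and applying the argument in the reverse direction (using that $B$ is invariant under $\mu_k$ acting on $B_0$) gives the opposite inequality.

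With these identities in place, (R)$\Leftrightarrow$(M) is an entry-by-entry check: after replacing $D_{kj;t}^{B_1;t_1}$ by $M_{kj;t}^{B_0;t_0}$, Property~R's formula for the $(k,j)$-entry of $D_t^{B_1;t_1}$ becomes exactly the $(k,j)$-entry of Property~M, while the rows with $i\neq k$ are the content of the second identity. Letting $k$ range over $1,\ldots,n$ converts one property into the other.

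For (D)$\Leftrightarrow$(R) I induct on $d(t_0,t)$ in $\T_n$. The base case $t=t_0$ is trivial, as both sides of Property~D specialize to $-I$. For the inductive step I use the identity $\mu_k((-B)^T)=(-\mu_k(B))^T$: moving $t$ along an edge of $\T_n$, the left-hand side $(D_t^{B_0;t_0})^T$ evolves by (the transpose of)~\eqref{usual D}, while the right-hand side $D_{t_0}^{(-B_t)^T;t}$ evolves by Property~R applied with the roles of initial and target seed interchanged. A row-by-row comparison, using that $\max$ is symmetric in its arguments, shows the two resulting formulas agree, and the argument runs equally well in both directions.

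The main obstacle is the second identity of the first step. Unlike the first identity, it cannot be read off from term-by-term substitution, since monomials of negative $x_{k;t_0}$-exponent produce rational-function contributions under $x_{k;t_0}\mapsto B/x_{k;t_1}$; one must invoke the Laurent phenomenon together with the bidirectional argument to conclude that the denominators in the other initial variables are strictly preserved. Once this is handled, the remaining equivalences reduce to routine matrix manipulations and a short induction.
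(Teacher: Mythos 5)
Your overall architecture matches the paper's: you first establish unconditionally that $D_t^{B_1;t_1}$ agrees with $D_t^{B_0;t_0}$ outside row $k$ and has row $k$ equal to row $k$ of $M_t^{B_0;t_0}$ (this is exactly Proposition~\ref{MD init}; your bidirectional-inequality argument for the rows $i\neq k$ is a valid alternative to the paper's subtraction-free argument), then you get (R)$\Leftrightarrow$(M) row by row, and you attack (D)$\Leftrightarrow$(R) by induction using \eqref{usual D}. The first two steps are fine.

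The gap is in the (D)$\Leftrightarrow$(R) induction, and it comes from mutating at the wrong end of the path. You move $t$, so the left side $(D_t^{B_0;t_0})^T$ evolves by the (unconditional) final-seed recursion \eqref{usual D}, while the right side $D_{t_0}^{(-B_t)^T;t}$ evolves by an \emph{initial}-seed mutation in the cluster pattern whose exchange matrices are $(-B_s)^T$. Thus the instance of Property~R you invoke is for the transposed-negated pattern, not for the fixed pattern of the theorem. When $B_0$ is skew-symmetric these coincide, but the theorem is stated for general cluster patterns (and is applied to non-skew-symmetric ones in finite type and rank two), and Property~R ``for all $t_0$, $t$, $k$'' quantifies only over seeds of the fixed pattern. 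Consequently your R$\Rightarrow$D induction assumes a hypothesis you were not given, and your D$\Rightarrow$R direction produces Property~R for the wrong pattern (converting it back would require an additional self-duality observation about Property~D that you do not make). The repair is to peel the edge off at the $t_0$ end instead: relate $D_t^{B_0;t_0}$ to $D_t^{B_1;t_1}$ by Property~R in the fixed pattern (using that \eqref{D backwards} is symmetric under swapping $t_0$ and $t_1$, since $B_0^{k\bullet}=-B_1^{k\bullet}$ and $\max$ is symmetric), and relate $D_{t_0}^{(-B_t)^T;t}$ to $D_{t_1}^{(-B_t)^T;t}$ by the unconditional recursion \eqref{usual D} applied in the dual pattern, noting $B_{t_0}^{(-B_t)^T;t}=-B_0^T$. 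With that reorientation both implications go through; this is precisely how the paper organizes Propositions~\ref{D implies R} and~\ref{R implies D}.
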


A natural question is to characterize the cluster algebras in which Properties D, R, and M hold.
As a start towards answering this question, we prove the following three theorems in Section~\ref{special sec}.
In every case, the proof is to read off Property D using a known formula for the denominator vectors.

\begin{theorem}
  \label{D rk 2}
  Properties D, R, and M holds in any cluster pattern whose exchange matrices are $2\times2$.
\end{theorem}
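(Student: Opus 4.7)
The plan is to reduce, via Theorem~\ref{DRM}, to verifying Property~D alone, and then to establish Property~D by inspecting the explicit formulas for rank-two denominator vectors that are available in the literature. In a coefficient-free cluster pattern with $2\times 2$ exchange matrices, a direct mutation computation shows that $\mu_k$ simply negates the exchange matrix for either choice of $k$, so at every vertex $t\in\T_2$ one has $B_t^{B_0;t_0}=(-1)^{d(t_0,t)}B_0$, where $d$ denotes the graph distance in $\T_2$. Consequently $(-B_t^{B_0;t_0})^T=\pm B_0^T$, and the right-hand side of~\eqref{DD} lives in a rank-two cluster pattern whose exchange matrix differs from $B_0$ only by the swap $b\leftrightarrow c$ and possibly an overall sign, writing $B_0=\begin{pmatrix}0&b\\-c&0\end{pmatrix}$ with $b,c\geq 0$.

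The tree $\T_2$ is a bi-infinite path, so the cluster variables of the pattern form a sequence $\ldots,x_{-1},x_0,x_1,x_2,\ldots$ in which each consecutive pair is a cluster. The denominator vectors of the $x_m$ with respect to a fixed cluster $(x_1,x_2)$ admit explicit descriptions in terms of a three-term recursion depending only on the product $bc$, with separate closed forms known in the finite-type cases $bc\in\{0,1,2,3\}$, the affine case $bc=4$, and the wild cases $bc\geq 5$ (see \cite{CP}). Both matrices appearing in~\eqref{DD} can be read off directly from these formulas for an arbitrary pair $t_0,t$. One then checks that the left-hand side is obtained from the right-hand side by reversing the direction of traversal along $\T_2$ (which interchanges the roles of $t_0$ and $t$) together with the swap $b\leftrightarrow c$ (which is the effect of transposing $B_0$); this is exactly what~\eqref{DD} asserts after applying the transpose.

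The main anticipated obstacle is bookkeeping: correctly aligning the indexings along $\T_2$ under reversal, and tracking how the recursion for $d$-vectors responds to the swap $b\leftrightarrow c$. Once these are handled, Property~D follows by inspection in each of the finite, affine, and wild ranges of $bc$, and Properties~R and~M then follow from Theorem~\ref{DRM}.
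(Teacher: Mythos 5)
Your proposal follows essentially the same route as the paper: reduce to Property~D via Theorem~\ref{DRM}, observe that rank-two mutation negates the exchange matrix so that $(-B_t)^T$ is $B_0$ with $b$ and $c$ swapped (up to sign and relabeling), and then read off \eqref{DD} from known explicit formulas for rank-two denominator vectors. The only corrections are that the relevant closed form for $bc\ge 4$ is \cite[(1.13)]{LLZ} (Chebyshev polynomials in $u=bc-2$) rather than \cite{CP}, and that the paper actually carries out the ``bookkeeping'' you defer, writing out the $D$-matrices for even and odd distance and verifying the transpose identity directly.
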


\begin{theorem}
  \label{D finite}
  Properties D, R, and M hold in any cluster pattern of finite type.
\end{theorem}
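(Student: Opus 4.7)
The strategy, as suggested in the excerpt, is to use Theorem~\ref{DRM} to reduce the claim to Property~D, and then to read Property~D off the known explicit formulas for denominator vectors in finite type.

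By Theorem~\ref{DRM}, it suffices to verify Property~D for all pairs $t_0, t \in \T_n$. In a finite type cluster algebra, the denominator vectors are known explicitly: after the classical work of Fomin--Zelevinsky, cluster variables are parametrized (in a $B_0$-dependent way) by the almost positive roots of the finite root system whose Cartan companion is built from $B_0$, and the denominator vector of a cluster variable is the expansion of the corresponding almost positive root in the basis of simple roots. Equivalently, the entry $D_{ij;t}^{B_0;t_0}$ records a compatibility degree between the cluster variable $x_{i;t_0}$ and the cluster variable $x_{j;t}$, in the sense made precise in \cite{CP}. Property~D then translates into the statement that this matrix of compatibility degrees is transposed when one swaps the initial data $(B_0,t_0)$ with $((-B_t)^T, t)$, which is precisely the symmetry of compatibility degrees catalogued in \cite{CP}. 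The plan is therefore to identify Property~D with that symmetry, working through the Cartan--Killing classification if a case-by-case check is needed for the non-simply-laced types.

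The main obstacle is convention-matching. The bijection between cluster variables and almost positive roots depends on the initial seed, so one has to verify that using $(-B_t)^T$ as the exchange matrix at the new initial vertex $t$ really does yield the bijection in which the roles of rows and columns of $D_t^{B_0;t_0}$ are interchanged. In particular, one must check that the induced permutation of simple roots and of almost positive roots lines up with the transposition on the right-hand side of \eqref{DD}, and that the sign and transpose operations $B_t \mapsto (-B_t)^T$ correspond to the correct duality on Cartan companions. Once this bookkeeping is carried out, Property~D is immediate from the finite-type formulas, and Properties~R and~M follow from Theorem~\ref{DRM}.
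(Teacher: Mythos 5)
This is essentially the paper's argument: reduce to Property~D via Theorem~\ref{DRM} and read it off from the Ceballos--Pilaud formula \cite{CP} expressing $D_{ij;t}^{B_0;t_0}$ as the compatibility degree of the almost positive roots attached to $x_{i;t_0}$ and $x_{j;t}$. The only point to settle in your ``convention-matching'' step is that the relevant symmetry is not $(\alpha\parallel\beta)=(\beta\parallel\alpha)$ (which fails outside simply-laced type) but the duality $(\alpha\parallel\beta)=(\beta^\vee\parallel\alpha^\vee)$ of Fomin--Zelevinsky \cite{ga}, which applies because $(-B_t)^T$ has transposed Cartan companion and hence its almost positive roots are the co-roots of the original system; with that in hand Property~D is immediate and no case-by-case check through the classification is needed.
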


\begin{theorem}
  \label{D surface}
  Properties D, R, and M hold for a cluster algebra arising from a marked surface if and only if the marked surface is one of the following.
  \begin{enumerate}
    \item
      \label{disk good}
      A disk with at most one puncture (finite types A and D).
    \item
      \label{small Atilde good}
      An annulus with no punctures and one or two marked points on each boundary component (affine types $\tilde A_{1,1}$, $\tilde A_{2,1}$, and $\tilde A_{2,2}$).
    \item
      \label{small Dtilde good}
      A disk with two punctures and one or two marked points on the boundary component (affine types $\tilde D_3$ and $\tilde D_4$).
    \item
      \label{sphere 4 good}
      A sphere with four punctures and no boundary components.
    \item
      \label{torus 1 good}
      A torus with exactly one marked point (either one puncture or one boundary component containing one marked point).
  \end{enumerate}
\end{theorem}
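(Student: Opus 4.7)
The plan is to use the explicit combinatorial description of denominator vectors for cluster algebras from marked surfaces, where each entry of a $\d$-vector is controlled (up to a notched-arc correction near punctures) by an intersection number of tagged arcs. By Theorem~\ref{DRM} it suffices to verify Property~D in each listed case, and to refute Property~D in every other case.

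For the positive direction, case~\ref{disk good} follows immediately from Theorem~\ref{D finite}, since a disk with at most one puncture is of finite type. For cases~\ref{small Atilde good}--\ref{torus 1 good}, I would use the formulas for $\d$-vectors of tagged arcs in terms of (generalized) intersection numbers with a tagged triangulation, as in \cite{FST}. Since intersection numbers are symmetric in their two arguments, the content of Property~D here is that the exchange matrix paired with the swapped seed is $(-B_t)^T$ once tags at punctures are accounted for correctly. I would verify this separately for each of the four small surfaces, organizing tagged arcs by homotopy class (wrapping number around an annulus, rotation around a puncture, etc.) so as to reduce to a finite check despite each of these cluster algebras having infinitely many cluster variables. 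The explicit simplicity of the surfaces in question --- at most four tagged triangulations up to the mapping class group in each case --- keeps the bookkeeping tractable.

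For the negative direction, I would produce, for each marked surface \emph{not} on the list, an explicit pair $(t_0,t)$ and a matrix entry where the identity~\eqref{DD} fails. My plan is to identify a short list of ``minimal bad'' surfaces just outside the good list --- for instance the annulus with three marked points on one boundary component, the twice-punctured disk with three boundary marked points, the sphere with five punctures, and the torus with two marked points --- and exhibit one concrete counterexample on each. Since any unlisted marked surface contains one of these minimal bad surfaces as a sub-triangulated region (in the sense that some triangulation of the ambient surface restricts to a triangulation of that region), the counterexample extends by taking the same flagged arc and reading off the same two mismatched entries.

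The main obstacle will be the negative direction: both assembling a clean list of minimal bad cases that collectively cover every unlisted surface, and giving an extension argument that lifts a local failure of \eqref{DD} to the ambient surface, taking care that tagging choices at extra punctures do not accidentally restore the symmetry. The positive direction is essentially a finite case-by-case verification once the $\d$-vector formulas of \cite{FST} are in hand.
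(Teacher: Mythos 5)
Your reduction (via Theorem~\ref{DRM}) to checking Property~D, and your plan to do so through the Fomin--Shapiro--Thurston formula expressing $\d$-vector entries as intersection numbers of tagged arcs, is exactly the paper's starting point. But your proposal then rests on a false premise that inverts where the difficulty lies. You write that ``intersection numbers are symmetric in their two arguments'' and locate the content of Property~D in matching the exchange matrix of the swapped seed with $(-B_t)^T$. In fact the exchange-matrix side is trivial: every exchange matrix from a surface is skew-symmetric, so $(-B_t)^T=B_t$ automatically, and Property~D is \emph{equivalent} to symmetry of the FST intersection number $(\alpha|\beta)$ on arcs corresponding to cluster variables. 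That intersection number is $A+B+C+D$ in the notation of \cite[Definition~8.4]{FST}; while $A$ and $C$ are symmetric, the tagging term $D$ is known to be asymmetric in general (\cite[Example~8.5]{FST}), and the loop term $B$ can be asymmetric as well. The entire theorem is a characterization of the surfaces on which $B+D$ happens to be symmetric. Under your premise Property~D would hold for \emph{every} marked surface, which contradicts the ``only if'' direction you also set out to prove; your proposal is internally inconsistent, and the positive direction as described verifies nothing beyond the finite-type case.

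Concretely, what is missing is the analysis the paper actually carries out: identifying when loops and notched taggings can occur (no loops forces $B=0$ and $D$ symmetric; no punctures forces $D=0$; one puncture and no boundary forces $D=0$ on relevant arcs), then checking by hand the few loops in the small annuli and twice-punctured disks, and using a universal-cover argument to show $B=0$ (resp.\ $B$ symmetric) on the once-punctured torus (resp.\ torus with one boundary marked point). Your negative-direction plan of exhibiting counterexamples on ``minimal bad'' surfaces and propagating them to larger ones is structurally the right idea and matches the paper's strategy, but your list of minimal cases is incomplete (e.g.\ higher genus, spheres with three or more boundary components, punctured annuli, and disks with three or more punctures must all be covered), and you cannot construct any counterexample to \eqref{DD} until you have first abandoned the claim that the intersection number is symmetric.
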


In Section~\ref{general sec}, we also prove some easier relations on $D$-matrices and $M$-matrices that hold in general.
The first of these shows that, to understand how $D$-matrices transform under mutation of the initial seed, one must understand $M$-matrices.
\begin{prop}
  \label{MD init}
  Suppose $t_0\dashname{k}t_1$ is an edge in $\T_n$.
  Then $D_t^{B_1;t_1}$ is obtained by replacing the $k\th$ row of $D_t^{B_0;t_0}$ with the $k\th$ row of $M_t^{B_0;t_0}$.  That is,
  \[
    D_t^{B_1;t_1}=D_t^{B_0;t_0}-(D_t^{B_0;t_0})^{k\bullet}+(M_t^{B_0;t_0})^{k\bullet}.
  \]
\end{prop}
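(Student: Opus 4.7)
The plan is to split into two cases, rows $i\neq k$ and row $i=k$, both handled via the substitution $x_k=L/x_k'$, where $L=P+Q$ with $P=\prod_{i\neq k}x_i^{[(B_0)_{ik}]_+}$ and $Q=\prod_{i\neq k}x_i^{[-(B_0)_{ik}]_+}$. First, I would group the Laurent expansion of $x_{j;t}$ in the initial cluster at $t_0$ by powers of $x_k$:
\[
x_{j;t}=\sum_{\alpha_k=-d_k}^{m_k}f_{\alpha_k}\cdot x_k^{\alpha_k},
\]
where $d_k=D_{kj;t}^{B_0;t_0}$, $m_k=M_{kj;t}^{B_0;t_0}$, and each $f_{\alpha_k}$ is a Laurent polynomial in $\{x_i:i\neq k\}$, nonzero for $\alpha_k\in\{-d_k,m_k\}$ by the definitions of $d_k$ and $m_k$.

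For row $k$ (i.e.\ $D_{kj;t}^{B_1;t_1}=m_k$): substituting $x_k=L/x_k'$ yields
\[
x_{j;t}=\sum_{\alpha_k=-d_k}^{m_k}f_{\alpha_k}L^{\alpha_k}(x_k')^{-\alpha_k},
\]
which I view as an identity in $K(x_k')$ with $K=\complexes(x_i:i\neq k)$. Because $x_k'$ is transcendental over $K$, different exponents of $x_k'$ contribute independently; the coefficient $f_{\alpha_k}L^{\alpha_k}$ is a nonzero element of $K$ exactly when $f_{\alpha_k}\neq 0$. Hence the lowest power of $x_k'$ that occurs in the Laurent expansion of $x_{j;t}$ at $t_1$ is $-m_k$, giving $D_{kj;t}^{B_1;t_1}=m_k=M_{kj;t}^{B_0;t_0}$.

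For rows $i\neq k$ (i.e.\ $D_{ij;t}^{B_1;t_1}=D_{ij;t}^{B_0;t_0}$): I would prove $D_{ij;t}^{B_1;t_1}\leq D_{ij;t}^{B_0;t_0}$ and invoke involutivity of $\mu_k$ to obtain the opposite inequality. Set $y:=x_{j;t}\prod_{i'\neq k}x_{i'}^{d_{i'}}$, with $d_{i'}=D_{i'j;t}^{B_0;t_0}$; then $y$ has nonnegative $x_{i'}$-exponents for every $i'\neq k$ in its Laurent expansion at $t_0$. Multiplying by $L^{d_k}$ and substituting $x_k=L/x_k'$ writes $L^{d_k}y$ as a sum of terms of the form $f_{\alpha_k}L^{d_k+\alpha_k}(x_k')^{-\alpha_k}\prod_{i'\neq k}x_{i'}^{d_{i'}}$ with $d_k+\alpha_k\geq 0$. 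Each factor of $L$ is a sum of two Laurent monomials of nonnegative $x_i$-exponent, so $L^{d_k}y$ still has nonnegative $x_i$-exponent in the new cluster. Because $L$ itself has $x_i$-valuation zero (one of $P$ or $Q$ carries $x_i^0$), dividing back by $L^{d_k}$ preserves this property, so $y$ has nonnegative $x_i$-exponent in the new cluster, yielding $D_{ij;t}^{B_1;t_1}\leq d_i$.

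The main obstacle is the second case: although the substitution introduces new $x_i$-dependence through $L$, the rigid structure of $L$ as a binomial of monomials with nonnegative exponents keeps its $x_i$-valuation at zero, so no negative $x_i$-exponents appear after clearing the $L^{d_k}$-denominator. The symmetric argument applied to the mutation $t_1\dashname{k}t_0$ then supplies the matching lower bound $D_{ij;t}^{B_0;t_0}\leq D_{ij;t}^{B_1;t_1}$, completing the proof.
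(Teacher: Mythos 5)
Your proof is correct, and it shares the paper's skeleton (substitute $x_k=L/x_k'$ from the exchange relation at $t_0$, then treat row $k$ and rows $i\neq k$ separately), but the two halves are argued differently. For row $k$, the paper writes $x_{j;t}=N/\prod_i x_i^{d_i}$ with $N=N_0+\cdots+N_px_k^p$ and tracks the power of $x_k'$ extracted from numerator and denominator after substitution; your regrouping by powers of $x_k$ and appeal to the transcendence of $x_k'$ over $K=\complexes(x_i:i\neq k)$ is the same computation packaged more cleanly, with linear independence of the powers of $x_k'$ over $K$ replacing the ``not divisible by $x_k'$'' bookkeeping. The genuine divergence is in rows $i\neq k$: the paper works with a subtraction-free expression for $x_{j;t}$ and uses that positivity forbids cancellation to conclude in one step that exactly $x_i^{-d_i}$ can be extracted both before and after the substitution, whereas you prove only the one-sided bound $D_{ij;t}^{B_1;t_1}\le D_{ij;t}^{B_0;t_0}$ by a valuation argument and then get the reverse bound from the involutivity of mutation at $k$. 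Your route avoids subtraction-free expressions entirely, at the cost of invoking the symmetry $t_0\leftrightarrow t_1$; both are legitimate. Two small points you should make explicit: first, the step ``dividing back by $L^{d_k}$ preserves this property'' relies on $y$ already being known to be a Laurent polynomial in the cluster at $t_1$ (the Laurent Phenomenon) together with additivity of the lowest-$x_i$-exponent on products of Laurent polynomials, i.e.\ the ``lowest power'' analogue of Lemma~\ref{highest poly}; second, for the symmetric step one should note that the exchange binomial attached to the edge $t_1\dashname{k}t_0$ is again $L$, since mutation at $k$ negates column $k$ of $B_0$ and merely swaps the two monomials $P$ and $Q$.
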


The final-seed mutation recursion on denominator vectors \cite[(7.6)--(7.7)]{ca4} is given in matrix form as follows.
The initial $D$-matrix $D_{t_0}^{B_0;t_0}$ is the negative of the identity matrix, and for each edge $t\dashname{k}t'$ in $\T_n$,
\begin{equation}
  \label{usual D}
  D_{t'}^{B_0;t_0} =
  D_t^{B_0;t_0}J_k +
  \max\left(D_t^{B_0;t_0}[(B_t^{B_0;t_0})^{\bullet k}]_+,\,\, D_t^{B_0;t_0}[(-B_t^{B_0;t_0})^{\bullet k}]_+ \right).
\end{equation}
Note that neither product of matrices inside the max in \eqref{usual D} has any nonzero entry outside the $k\th$ column.
It turns out that $\m$-vectors satisfy the same recursion, but with different initial conditions.

\begin{prop}
  \label{usual M}
  The initial $M$-matrix $M_{t_0}^{B_0;t_0}$ is the identity matrix.
  Given an edge $t\dashname{k}t'$ in $\T_n$,
  \[
    M_{t'}^{B_0;t_0} =
    M_t^{B_0;t_0}J_k +
    \max\left(M_t^{B_0;t_0}[(B_t^{B_0;t_0})^{\bullet k}]_+,\,\, M_t^{B_0;t_0}[(-B_t^{B_0;t_0})^{\bullet k}]_+ \right).
  \]
\end{prop}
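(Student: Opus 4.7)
\medskip

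The plan is to handle the initial condition and the recursion separately, and within the recursion to treat columns $j\ne k$ and column $k$ by different arguments. Throughout I abbreviate $B_t^{B_0;t_0}$ to $B_t$, write $M_{ij;t}$ for the $(i,j)$-entry of $M_t^{B_0;t_0}$, and let $b_{jk}^{\,t}$ denote the $(j,k)$-entry of $B_t$. The initial condition is immediate: the cluster variable $x_{j;t_0}$ is the single monomial $x_j$, so the only power of $x_i$ appearing in it is $\delta_{ij}$; hence $\m_{j;t_0}=\e_j$ and $M_{t_0}^{B_0;t_0}=I$. Now fix an edge $t\dashname{k}t'$ in $\T_n$. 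For every $j\ne k$ the mutation leaves $x_{j;t}$ unchanged, so $\m_{j;t'}=\m_{j;t}$; since $[\pm B_t^{\bullet k}]_+$ vanishes outside column $k$ and $M_t^{B_0;t_0}J_k$ agrees with $M_t^{B_0;t_0}$ on columns $j\ne k$, the recursion holds on those columns at sight.

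The substantive work lies in column $k$. Starting from the exchange relation
\[
x_{k;t}\cdot x_{k;t'}\;=\;P_++P_-,\qquad P_{\pm}:=\prod_{j=1}^{n}x_{j;t}^{[\pm b_{jk}^{\,t}]_+},
\]
I would compute the highest power of $x_i$ on both sides as a Laurent polynomial in the initial cluster $(x_1,\ldots,x_n)$ and equate. The essential input is \emph{Laurent positivity}: every cluster variable, written in the initial cluster, has positive coefficients (by Lee--Schiffler in the skew-symmetric case, and by Gross--Hacking--Keel--Kontsevich in general). Granted positivity, each $P_{\pm}$ is a product of positive-coefficient Laurent polynomials and hence itself has positive coefficients, so leading $x_i$-terms cannot cancel. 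Consequently
\[
(\text{max power of }x_i\text{ in }P_{\pm})\;=\;\sum_{j=1}^{n}[\pm b_{jk}^{\,t}]_+\,M_{ij;t},
\]
which is exactly the $(i,k)$-entry of $M_t^{B_0;t_0}[(\pm B_t)^{\bullet k}]_+$. By the same positivity argument, the max power of $x_i$ in $P_++P_-$ equals the maximum of these two values, while the max power of $x_i$ in the product $x_{k;t}\cdot x_{k;t'}$ equals $M_{ik;t}+M_{ik;t'}$. Solving for $M_{ik;t'}$ gives the $(i,k)$-entry of the claimed recursion.

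The main obstacle is the ``no cancellation'' step: both the passage from max-of-sum to max-of-maxes and the passage from max-of-product to sum-of-maxes can in principle fail without positivity. Once Laurent positivity is invoked, however, the remainder of the argument is bookkeeping, in exact analogy with the standard derivation of the final-seed recursion \eqref{usual D} on $\d$-vectors, where the corresponding \emph{lowest}-power statements rest on the same noncancellation input.
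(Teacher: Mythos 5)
Your proof is correct, and its overall skeleton (trivial initial condition, columns $j\neq k$ unchanged, column $k$ via the exchange relation) matches the paper's; but the key non-cancellation step is handled by a genuinely different and much heavier input. The paper never invokes Laurent positivity. Instead it observes that every cluster variable has a \emph{subtraction-free rational expression} (an immediate consequence of the exchange relations themselves), writes the two monomials of $U=P_++P_-$ as ratios $a/c$ and $b/d$ of nonnegative-coefficient polynomials, and concludes $m_i(U)=\max\bigl(m_i(a/c),m_i(b/d)\bigr)$ because the subtraction-free sum $\frac{ad}{cd}+\frac{bc}{cd}$ admits no cancellation; two elementary lemmas ($m_i(fg)=m_i(f)+m_i(g)$ for Laurent polynomials, and $m_i(f/g)=m_i(f)-m_i(g)$ when the quotient is Laurent) then finish the computation. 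Your route instead multiplies through by $x_{k;t}$ and appeals to positivity of the Laurent expansions (Lee--Schiffler, GHKK). This is valid, and your trick of working with the product $x_{k;t}\cdot x_{k;t'}$ rather than the quotient is a small simplification that avoids the paper's Lemma~\ref{highest rat}; but it makes the proposition depend on a deep theorem where an elementary self-contained argument suffices. One small inaccuracy in your discussion of obstacles: the passage from ``max power of a product'' to ``sum of max powers'' cannot fail even without positivity, since the leading coefficients in $x_i$ of Laurent polynomials over an integral domain cannot multiply to zero (this is exactly the paper's Lemma~\ref{highest poly}); only the sum $P_++P_-$ genuinely requires a non-cancellation input.
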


Finally, we present some conjectures and results on Property  R in the context of source-sink moves.
Suppose that in the exchange matrix $B_0$, all entries in row $k$ weakly agree in sign.
That is, either all entries in row $k$ are nonnegative (and equivalently all entries in column $k$ are nonpositive) or all entries in row $k$ are nonpositive (and equivalently all entries in column $k$ are nonnegative).
In this case, mutation of $B_0$ in direction $k$ is often called a \newword{source-sink move}, referring to the operation on quivers of reversing all arrows at a source or a sink.
We conjecture that Property R holds when mutation at $k$ is a source-sink move.
In this case, equation \eqref{D backwards} has a particularly simple form.

\begin{conj}
  \label{source-sink conj}
  Suppose $t_0\dashname{k}t_1$ is an edge in $\T_n$ and $B_1$ is $\mu_k(B_0)$.
  If all entries in row $k$ of $B_0$ weakly agree in sign, then
  \begin{equation}
    \label{D backwards source-sink}
    D_t^{B_1;t_1} =
    J_kD_t^{B_0;t_0} +
    \bigl[|B_0^{k\bullet}|D_t^{B_0;t_0}\bigr]_+\\
  \end{equation}
\end{conj}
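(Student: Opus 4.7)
The plan begins with the observation that the source-sink hypothesis trivializes the max in equation~\eqref{D backwards}. Indeed, if row $k$ of $B_0$ is weakly nonnegative then $[B_0^{k\bullet}]_+=|B_0^{k\bullet}|$ and $[-B_0^{k\bullet}]_+=0$ (with the roles swapped in the nonpositive case), so one of the two matrices inside the max vanishes and the other equals $|B_0^{k\bullet}|D_t^{B_0;t_0}$. Entrywise max with zero then produces $\bigl[|B_0^{k\bullet}|D_t^{B_0;t_0}\bigr]_+$, so equation~\eqref{D backwards source-sink} is precisely the specialization of Property R to this setting. Thus Conjecture~\ref{source-sink conj} is equivalent to the assertion that Property R holds for the single edge $t_0\dashname{k}t_1$ whenever that edge is a source-sink move.

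Next I would apply Proposition~\ref{MD init}, which says that $D_t^{B_1;t_1}$ and $D_t^{B_0;t_0}$ agree outside row $k$ and that row $k$ of $D_t^{B_1;t_1}$ equals row $k$ of $M_t^{B_0;t_0}$. Combining with the collapsed form of Property R, the conjecture reduces to the single-row identity
\[
  \bigl(M_t^{B_0;t_0}\bigr)^{k\bullet} = -\bigl(D_t^{B_0;t_0}\bigr)^{k\bullet} + \bigl[|B_0^{k\bullet}|D_t^{B_0;t_0}\bigr]_+,
\]
or, entrywise, $m_{k,j;t}^{B_0;t_0}=-d_{k,j;t}^{B_0;t_0}+\max\bigl(0,\sum_l |b_{kl}|\,d_{l,j;t}^{B_0;t_0}\bigr)$ for each $j$. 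So the task becomes: under the source-sink hypothesis on the initial vertex $k$, pin down the highest power of the single initial variable $x_k$ in the Laurent expansion of each cluster variable $x_{j;t}^{B_0;t_0}$.

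The first approach I would attempt is algebraic. When $k$ is a source or sink, $B_1=\mu_k(B_0)$ is obtained from $B_0$ merely by negating row $k$ and column~$k$, and the exchange relation reduces to $x_kx_k'=M+1$, where $M$ is a single Laurent monomial in $x_1,\ldots,\hat{x_k},\ldots,x_n$. Substituting $x_k=(M+1)/x_k'$ into the Laurent expansion of $x_{j;t}^{B_0;t_0}$ and binomial-expanding each factor $(M+1)^{\alpha_k}$ should, after the cancellations forced by the Laurent Phenomenon at $t_1$, let one read off $d_{k,j;t}^{B_1;t_1}$ from the exponent support of $x_{j;t}^{B_0;t_0}$ in $x_k$ together with the monomial $M$. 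The main obstacle, and the reason the conjecture is stated rather than proven in general, is controlling those cancellations without a closed form for $x_{j;t}^{B_0;t_0}$: a priori many distinct monomials could conspire to kill the ostensibly ``maximal'' $x_k'$-contribution.

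For cluster algebras from surfaces this algebraic step can be replaced by geometric input. Denominator vectors count (essentially) geometric intersection numbers between arcs, and the Musiker--Schiffler--Williams formula expands $x_{j;t}^{B_0;t_0}$ as a sum over perfect matchings of the snake graph associated to the arc $\gamma_{j;t}$ and the initial triangulation. A source-sink arc $\gamma_k$ has a rigid local picture in the triangulation (all adjacent triangles on one side are oriented compatibly), and this restricts the snake graph enough that one can exhibit a perfect matching whose $x_k$-weight realizes the conjectured right-hand side, while simultaneously verifying that no matching exceeds it. Carefully extending this analysis through tagged arcs, self-folded triangles, and the various small-surface exceptions will fill most of the work, but the core argument is combinatorial rather than algebraic and so bypasses the obstruction above in the one setting where a full proof can be given.
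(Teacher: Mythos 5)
Your reduction is sound and matches the paper's: under the source-sink hypothesis one of the two matrices inside the $\max$ in~\eqref{D backwards} vanishes, so~\eqref{D backwards source-sink} is exactly Property~R specialized to the edge $t_0\dashname{k}t_1$, and Proposition~\ref{MD init} reduces everything to row $k$, i.e.\ to identifying the top power of $x_k$ in each $x_{j;t}^{B_0;t_0}$. Keep in mind that the statement you were handed is a conjecture: the paper proves it only for marked surfaces (Theorem~\ref{source-sink surf}), and your proposal, rightly, claims only the surface case; your honest assessment of the obstruction in the general algebraic approach (uncontrolled cancellation after substituting $x_k=(M+1)/x_k'$) is consistent with why the statement remains conjectural.

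For surfaces, however, your route diverges from the paper's and the crucial step is missing. The paper never touches the Laurent expansion: it applies Theorem~\ref{FST denom} to \emph{both} clusters, so that $D_t^{B_1;t_1}$ and $D_t^{B_0;t_0}$ are each matrices of intersection numbers, and the conjecture collapses to the purely topological identity $(\alpha'|\beta)=-(\alpha|\beta)+\sum_{\gamma\in T}b_{\alpha\gamma}(\gamma|\beta)$. Because the coefficients $b_{\alpha\gamma}$ depend only on the triangulation near $\alpha$, this becomes a \emph{finite} verification: there are only four local configurations around a source arc (assembled from puzzle pieces, after discarding the surfaces already covered by Theorem~\ref{D surface}), and for each configuration only finitely many ways $\beta$ can cross it, each crossing contributing additively to both sides. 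Your plan instead computes $M_{kj;t}^{B_0;t_0}$ via the Musiker--Schiffler--Williams matching formula, and the sentence about exhibiting a matching that ``realizes the conjectured right-hand side, while simultaneously verifying that no matching exceeds it'' is not an argument but a restatement of the goal in snake-graph language: you have given no mechanism for bounding the $x_k$-weight over \emph{all} perfect matchings of an arbitrarily long snake graph, nor explained why the source-sink condition localizes that global optimization. That bound is the entire content of the theorem; without an analogue of the paper's locality observation (which is what makes the case analysis finite), the proposal does not close.
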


We also make two other closely related conjectures.
Let $A$ be the \newword{Cartan companion} of $B_0$, defined by setting $A_{ii}=2$ for all $i$ and $A_{ij}=-|(B_0)_{ij}|$ for $i\neq j$.
Then $A$ is a (generalized) Cartan matrix and thus defines a root system and a root lattice in the usual way.
It also defines a (generalized) Weyl group $W$, generated by simple reflections $s_1,\ldots,s_n$ given by $s_k(\alpha_\ell)=\alpha_\ell-A_{k\ell}\alpha_k$, where the $\alpha_i$ are the simple roots.
If $\beta$ is in the root lattice, then write $[\beta:\alpha_i]$ for the coefficient of $\alpha_i$ in the simple root coordinates of $\beta$.
Then $[s_k(\beta):\alpha_i]=[\beta:\alpha_i]$ if $i\neq k$ and $[s_k(\beta):\alpha_k]=-[\beta:\alpha_k]+\sum_{\ell=1}^n|(B_0)_{k\ell}|[\beta:\alpha_\ell]$.
Following \cite[Section~2]{ga}, we define a piecewise linear modification $\sigma_k$ of $s_k$ by setting $[\sigma_k(\beta):\alpha_i]=[\beta:\alpha_i]$ if $i\neq k$ and $[\sigma_k(\beta):\alpha_k]=-[\beta:\alpha_k]+\sum_{\ell=1}^n|(B_0)_{k\ell}|\bigl[[\beta:\alpha_\ell]\bigr]_+$.
We think of $\sigma_k$ as a map on (certain) integer vectors by interpreting them as simple root coordinates of vectors in the root lattice.
We also think of $\sigma_k$ as a map on integer matrices by applying it to each \emph{column}.
\begin{conj}
  \label{source-sink sigma}
  Suppose $t_0\dashname{k}t_1$ is an edge in $\T_n$ and $B_1$ is $\mu_k(B_0)$.
  If all entries in row $k$ of $B_0$ weakly agree in sign, then $D_t^{B_1;t_1}=\sigma_kD_t^{B_0;t_0}$.
\end{conj}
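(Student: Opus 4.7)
The plan is to prove Conjecture \ref{source-sink sigma} by combining Conjecture \ref{source-sink conj} with a sign-coherence property of denominator entries indexed by the support of row $k$ of $B_0$.

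Comparing the two conjectured formulas for $D_t^{B_1;t_1}$ entry-wise, one sees that they agree on every row except row $k$: the map $\sigma_k$ fixes all coordinates except the $k$-th, and $J_k$ only negates row $k$, while $\bigl[|B_0^{k\bullet}|D_t^{B_0;t_0}\bigr]_+$ vanishes off row $k$. On the $(k,j)$ entry, Conjecture \ref{source-sink conj} predicts $-D_{kj;t}^{B_0;t_0}+\bigl[\sum_\ell|(B_0)_{k\ell}|D_{\ell j;t}^{B_0;t_0}\bigr]_+$, whereas Conjecture \ref{source-sink sigma} predicts $-D_{kj;t}^{B_0;t_0}+\sum_\ell|(B_0)_{k\ell}|\bigl[D_{\ell j;t}^{B_0;t_0}\bigr]_+$. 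The elementary inequality $\sum_\ell a_\ell[x_\ell]_+\ge\bigl[\sum_\ell a_\ell x_\ell\bigr]_+$ for $a_\ell\ge0$ is an equality if and only if those $x_\ell$ with $a_\ell>0$ weakly agree in sign. Hence the two conjectured formulas coincide exactly when, for every column $j$, the entries $D_{\ell j;t}^{B_0;t_0}$ with $|(B_0)_{k\ell}|>0$ are weakly sign-coherent, and proving Conjecture \ref{source-sink sigma} reduces to proving Conjecture \ref{source-sink conj} together with this sign-coherence.

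Granting Conjecture \ref{source-sink conj}, I would approach the sign-coherence by induction on the distance from $t_0$ to $t$ in $\T_n$. The base case $t=t_0$ is immediate from $D_{t_0}^{B_0;t_0}=-I$. For the inductive step, the recursion \eqref{usual D} modifies a single column of the $D$-matrix at each mutation in some direction $j$; one should analyze entry-by-entry how the $\ell$-th component of that column changes for each $\ell$ in the support of row $k$ of $B_0$, and verify that weak sign consistency across these $\ell$ is preserved. The source-sink hypothesis forces a rigid local structure of $B_0$ at index $k$, and this rigidity should propagate through successive mutations in a way that prevents the introduction of mixed signs in the relevant rows.

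The main obstacle is that denominator vectors do not satisfy sign-coherence in general, so the argument must genuinely exploit the source-sink structure. Unlike $\g$-vectors and $\c$-vectors, $\d$-vectors lack a direct geometric interpretation via cones of a fan, so controlling their sign behavior under an arbitrary sequence of mutations appears delicate. Promising angles of attack include translating the sign-coherence statement into one about $\m$-vectors via Proposition \ref{MD init} and Proposition \ref{usual M}, where the parallel recursion may be more tractable, or interpreting sign-coherence in terms of a sub-root-system determined by the Cartan companion of $B_0$ restricted to $\{\ell : |(B_0)_{k\ell}|>0\}$, paralleling the geometric analysis of $\sigma_k$ in \cite{ga}.
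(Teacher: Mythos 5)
The statement you are proving is stated in the paper as a \emph{conjecture}, and the paper does not prove it in general; it only (a) shows it is equivalent to Conjecture~\ref{source-sink conj} under a sign-coherence hypothesis, and (b) proves both conjectures for marked surfaces. The first half of your proposal --- the entrywise comparison of the two formulas, the observation that they agree off row $k$, and the inequality $\sum_\ell a_\ell[x_\ell]_+\ge\bigl[\sum_\ell a_\ell x_\ell\bigr]_+$ with equality exactly when the relevant $x_\ell$ weakly agree in sign --- is correct and is essentially the paper's Proposition~\ref{equiv conj detailed} (you even sharpen it slightly by noting that only the entries $D_{\ell j;t}^{B_0;t_0}$ with $(B_0)_{k\ell}\neq0$ need to be sign-coherent, whereas the paper assumes signed columns outright). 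So your reduction step is sound.

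The gap is everything after that. You have reduced the conjecture to two other statements --- Conjecture~\ref{source-sink conj} and (a weakening of) Conjecture~\ref{signed columns} --- both of which are themselves open conjectures in the paper; the second is a weakening of \cite[Conjecture~7.4]{ca4}, which is a well-known open problem. Your proposed induction on the distance from $t_0$ to $t$ is only a sketch: you do not identify what invariant is preserved under the column mutation \eqref{usual D}, and you yourself concede that $\d$-vectors lack the fan-theoretic structure that makes the analogous argument work for $\c$- and $\g$-vectors. Nothing in the source-sink hypothesis on row $k$ of $B_0$ obviously controls the signs of the entries of $D_t^{B_0;t_0}$ after an arbitrary mutation sequence, so ``this rigidity should propagate'' is an assertion, not an argument. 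By contrast, in the one setting where the paper actually proves the conjecture (marked surfaces, Theorem~\ref{source-sink surf}), it does not attempt such an induction: signed columns is immediate there from \cite[Theorem~8.6]{FST} since intersection numbers are nonnegative, and Conjecture~\ref{source-sink conj} is verified directly as the identity $(\alpha'|\beta)=-(\alpha|\beta)+\sum_{\gamma}b_{\alpha\gamma}(\gamma|\beta)$ on intersection numbers, checked by a finite case analysis of the local configurations around a source built from tagged puzzle pieces. If you want a complete proof of some instance of the conjecture, you would need to supply a concrete mechanism of that kind; as written, your proposal establishes only the (already known) equivalence and leaves the substantive content unproved.
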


To relate Conjecture~\ref{source-sink sigma} to Conjecture~\ref{source-sink conj}, we quote the following conjecture, which is a significant weakening of \cite[Conjecture~7.4]{ca4}.
We will say a matrix $D$ \newword{has signed columns} if every column of $D$ either has all nonnegative entries or all nonpositive entries.
Similarly, $D$ \newword{has signed rows} if every row of $D$ either has all nonnegative entries or all nonpositive entries.

\begin{conj}
  \label{signed columns}
  For all $t\in\T_n$, the matrix $D_t^{B_0;t_0}$ has signed columns.
\end{conj}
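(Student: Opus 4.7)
The plan is to reduce Conjecture~\ref{signed columns} to the stronger statement that $\d_{j;t}^{B_0;t_0}$ has all entries nonnegative whenever $x_{j;t}$ is not an initial cluster variable---``positivity of denominator vectors''---and then attack that via positivity of Laurent expansions together with a primitivity claim. The reduction is immediate: if $x_{j;t}=x_i$ for some $i$, then $\d_{j;t}^{B_0;t_0}=-\e_i$, which has all entries nonpositive and is therefore signed. Unwinding the definition in the remaining case, the $i$th entry of $\d_{j;t}^{B_0;t_0}$ equals $-\min\{m_i : \m\in\mathrm{supp}(x_{j;t})\}$, where $\mathrm{supp}(x_{j;t})$ is the set of exponent vectors appearing in the Laurent expansion of $x_{j;t}$ in $x_1,\ldots,x_n$; so this entry is nonnegative exactly when $x_i$ does not divide $x_{j;t}$ in $\integers[x_1^{\pm 1},\ldots,x_n^{\pm 1}]$.

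Next I would invoke positivity of the Laurent expansion (a theorem of Gross--Hacking--Keel--Kontsevich in the skew-symmetric case) to write $x_{j;t}=\sum_\m c_\m x^\m$ with $c_\m>0$ throughout the support; positivity rules out cancellation between monomials and reduces the problem to a single primitivity claim: no $x_i$ is a common factor of every monomial in the Laurent expansion of a non-initial cluster variable. In any case where an additive categorification by a (generalized) cluster category $\mathcal{C}$ is available---for instance finite or acyclic type, or more generally the Jacobi-finite case---non-initial cluster variables correspond to indecomposable rigid objects of $\mathcal{C}$, and the associated $\d$-vectors are known to be nonnegative via Caldero--Chapoton-type formulas, settling the conjecture there. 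For a general proof I would try to use the scattering-diagram / theta-function realization of cluster monomials of Gross--Hacking--Keel--Kontsevich, arguing that the Newton polytope of the theta function attached to $x_{j;t}$ is described by broken-line data emanating from its $\g$-vector and that this data cannot be entirely supported in the half-space $\{m_i\geq 1\}$ unless $x_{j;t}=x_i$.

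The main obstacle is the primitivity claim in full generality: positivity of the Laurent expansion alone does not preclude every monomial sharing a common factor of some $x_i$, and ruling this out seems to require genuinely new input---either representation-theoretic categorification or scattering-diagram geometry. A secondary technical difficulty is aligning the normalization of $\g$-vectors with the ``initial exponent'' of the corresponding theta function, which is needed to translate a scattering-diagram statement into the divisibility statement in $\integers[x_1^{\pm 1},\ldots,x_n^{\pm 1}]$ that we require.
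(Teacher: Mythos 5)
This statement is labeled as a \emph{conjecture} in the paper, and the paper offers no proof of it in general: it is introduced explicitly as a weakening of \cite[Conjecture~7.4]{ca4}, and the only place it is established is for cluster algebras from marked surfaces, where the authors simply observe that the stronger conjecture follows easily from \cite[Theorem~8.6]{FST}. So there is no proof in the paper to compare yours against, and your proposal does not supply one either. Your reduction is correct and is exactly the standard way to think about the statement: the initial columns are $-\e_i$, and for a non-initial cluster variable the $i$th entry of the $\d$-vector is nonnegative precisely when $x_i$ does not divide the variable in $\integers[x_1^{\pm1},\ldots,x_n^{\pm1}]$, so signed columns would follow from nonnegativity of $\d$-vectors of non-initial cluster variables --- which is (part of) \cite[Conjecture~7.4]{ca4} itself. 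In other words, your reduction lands you essentially back at the conjecture you set out to prove.

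The genuine gap is the one you name yourself: the ``primitivity claim.'' Positivity of Laurent coefficients does not rule out a common monomial factor $x_i$ dividing every term, and neither the categorification remarks (which in any case only cover special classes, and where the relation between $\d$-vectors and dimension vectors is itself delicate) nor the sketched scattering-diagram/broken-line argument is carried out to the point of closing this. A proposal whose central step is flagged as ``seems to require genuinely new input'' is a research plan, not a proof. If you want something provable at the level of this paper, the honest statement is: the conjecture holds for surfaces by \cite[Theorem~8.6]{FST} (all intersection numbers $(\alpha|\beta)$ with $\alpha\neq\beta$ are nonnegative), for finite type by the compatibility-degree description \cite{CP}, and for rank two by \cite{LLZ}; the general case remains open here.
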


Conjecture~\ref{signed columns} is not the same as another weakening of \cite[Conjecture~7.4]{ca4}, namely ``sign-coherence of $\d$-vectors,'' which asserts that for all $t\in\T_n$, the matrix $D_t^{B_0;t_0}$ has signed \emph{rows}.

We prove the following easy proposition in Section~\ref{general sec}.
\begin{prop}
  \label{equiv conj}
  If Conjecture~\ref{signed columns} holds, then Conjectures~\ref{source-sink conj} and \ref{source-sink sigma} are equivalent.
\end{prop}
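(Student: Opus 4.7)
The plan is to show that, under Conjecture~\ref{signed columns}, the right-hand sides of the formulas asserted in Conjectures~\ref{source-sink conj} and \ref{source-sink sigma} are literally the same matrix. Since both conjectures assert this common matrix equals $D_t^{B_1;t_1}$ under the same source-sink hypothesis on $B_0$, the two statements become logically equivalent.

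First I would compare the two candidate right-hand sides row by row, working one column at a time: fix a column index $j$ and let $\d$ be the corresponding column of $D_t^{B_0;t_0}$. For each row $i\neq k$, both $J_k\d+[|B_0^{k\bullet}|\d]_+$ and $\sigma_k(\d)$ leave the entry $\d_i$ unchanged, since the correction term $[|B_0^{k\bullet}|\d]_+$ is supported in row $k$ and $\sigma_k$ by construction fixes all simple-root coordinates outside index $k$. No hypothesis is needed for these rows.

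The only nontrivial comparison is in row $k$. There the formula from Conjecture~\ref{source-sink conj} contributes $-\d_k+\bigl[\sum_{\ell}|(B_0)_{k\ell}|\d_\ell\bigr]_+$, while $\sigma_k(\d)$ contributes $-\d_k+\sum_{\ell}|(B_0)_{k\ell}|[\d_\ell]_+$. The entire content of the proposition is thus the single identity $\bigl[\sum_{\ell}|(B_0)_{k\ell}|\d_\ell\bigr]_+=\sum_{\ell}|(B_0)_{k\ell}|[\d_\ell]_+$, which fails for general vectors $\d$ but is a trivial two-case check once the coordinates of $\d$ weakly agree in sign: if all $\d_\ell\geq 0$ both sides collapse to $\sum_{\ell}|(B_0)_{k\ell}|\d_\ell$, and if all $\d_\ell\leq 0$ both sides vanish. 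Conjecture~\ref{signed columns} supplies exactly this sign agreement for every column of $D_t^{B_0;t_0}$, so the proposition reduces to this observation.

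There is no substantive obstacle here; the real content is simply the recognition that the operator $[\,\cdot\,]_+$ commutes with a nonnegative linear combination precisely when the inputs weakly agree in sign, which is why Conjecture~\ref{signed columns} is exactly the hypothesis that forces the two conjectured recursions to coincide. The only thing to be slightly careful about is that the sign-agreement must hold column-wise (matching Conjecture~\ref{signed columns}) rather than row-wise (the distinct ``sign-coherence of $\d$-vectors'' hypothesis), which is why that distinction is emphasized in the statement preceding the proposition.
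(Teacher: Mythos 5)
Your proof is correct and is essentially identical to the paper's argument (Proposition~\ref{equiv conj detailed}): the paper likewise fixes a column, notes that the two right-hand sides agree off row $k$, and reduces row $k$ to the identity $\bigl[\sum_{\ell}|(B_0)_{k\ell}|\d_\ell\bigr]_+=\sum_{\ell}|(B_0)_{k\ell}|\bigl[\d_\ell\bigr]_+$, which holds exactly because the column's entries weakly agree in sign. Nothing further is needed.
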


Theorems~\ref{D rk 2} and \ref{D finite} imply Conjecture~\ref{source-sink conj} in the rank-two and finite-type cases, and Theorem~\ref{D surface} implies it for certain surfaces.
In \cite{Dylan}, Dylan Rupel proved Conjectures~\ref{source-sink sigma} and~\ref{signed columns} (and thus Conjecture~\ref{source-sink conj}) for $B$ acyclic.
As further evidence in support of the conjectures in general, we prove the following theorem in Section~\ref{source-sink surf sec}.

\begin{theorem}
  \label{source-sink surf}
  Conjectures~\ref{source-sink conj} and~\ref{source-sink sigma} hold in cluster algebras arising from marked surfaces.
\end{theorem}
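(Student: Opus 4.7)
The plan is to reduce to Conjecture~\ref{source-sink conj} and then verify it by a local calculation with intersection numbers. In the surface setting, the denominator vector of the cluster variable associated to a tagged arc $\gamma$ with respect to a tagged triangulation $T=(\tau_1,\ldots,\tau_n)$ is, by the formulas of Fomin-Shapiro-Thurston and Fomin-Thurston, the vector of (tagged) intersection numbers $\iota(\gamma,\tau_i)$, except that $\d(\tau_i;T)=-\e_i$. Every column of a $D$-matrix is therefore either entry-wise nonnegative or of the form $-\e_i$, so Conjecture~\ref{signed columns} holds for surfaces, and by Proposition~\ref{equiv conj} it is enough to prove Conjecture~\ref{source-sink conj}.

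Next I would reduce to a single column. Fix an edge $t_0\dashname{k}t_1$ of $\T_n$ for which mutation at $k$ is a source-sink move, write $T'=(\tau_1,\ldots,\tau_{k-1},\tau_k',\tau_{k+1},\ldots,\tau_n)$ for the flipped triangulation, and fix a tagged arc $\gamma$ corresponding to a cluster variable at some $t\in\T_n$. For $i\neq k$, the $i\th$ entries of $\d(\gamma;T)$ and $\d(\gamma;T')$ coincide because $\tau_i$ lies in both $T$ and $T'$, so Conjecture~\ref{source-sink conj} reduces to the single scalar identity
\[ \iota(\gamma,\tau_k')=-\iota(\gamma,\tau_k)+\Bigl[\,\textstyle\sum_{\ell=1}^n |(B_0)_{k\ell}|\,\iota(\gamma,\tau_\ell)\Bigr]_+ ,\]
together with the special case $\gamma=\tau_k$, in which $\d(\tau_k;T')=\e_k$ and the identity is immediate.

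The heart of the argument is a local analysis in the polygon (or degenerate figure) $Q$ formed by the two triangles of $T$ adjacent to $\tau_k$. An arc $\tau_\ell$ appears on $\partial Q$ exactly when $(B_0)_{k\ell}\neq 0$, with $|(B_0)_{k\ell}|$ counting the number of sides of $Q$ equal to $\tau_\ell$; the source-sink hypothesis is what guarantees that no cancellation between the two triangles reduces this count. Putting $\gamma$ in minimal position with respect to $T\cup\{\tau_k'\}$, I would decompose $\gamma$ into strands passing through $Q$, each of which is a chord of $Q$ joining two sides or a side and a vertex. A short enumeration of chord types shows that every such strand contributes the same amount to $\iota(\gamma,\tau_k')$ as to $-\iota(\gamma,\tau_k)+\sum_\ell|(B_0)_{k\ell}|\,\iota(\gamma,\tau_\ell)$, and the positive-part $[\cdot]_+$ absorbs the extremal case in which $\gamma$ does not enter $Q$ at all.

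The main obstacle is the case analysis in degenerate configurations: self-folded triangles inside or meeting $\partial Q$, punctures with nontrivial taggings on $\gamma$ or on the arcs of $T$, and identifications among the sides of $Q$ (which can make $|(B_0)_{k\ell}|$ as large as $2$). The source-sink hypothesis rules out some bad configurations, but the Fomin-Thurston conventions for tagged intersection numbers must be applied carefully in each remaining case, and one must also verify the identity when $\gamma$ has an endpoint at a vertex or puncture of $Q$. I expect this combinatorial bookkeeping---rather than any single conceptual difficulty---to be the bulk of the technical work.
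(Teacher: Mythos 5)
Your reduction matches the paper's exactly: Conjecture~\ref{signed columns} holds for surfaces, so Proposition~\ref{equiv conj} reduces the theorem to Conjecture~\ref{source-sink conj}, and Theorem~\ref{FST denom} turns that into a single scalar identity on intersection numbers (the paper's equation~\eqref{the task}; your handling of the rows $i\neq k$ and of the special column $\gamma=\tau_k$ is fine). The plan of verifying this identity by a local analysis near $\tau_k$ is also the paper's plan, so the skeleton is right.

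The gap is that essentially all of the mathematical content of the proof is the case analysis you defer as ``combinatorial bookkeeping,'' and the setup you propose --- the quadrilateral $Q$ of the two triangles adjacent to $\tau_k$, with $\gamma$ cut into chords of $Q$ --- does not obviously close. Two concrete problems. First, the Fomin--Shapiro--Thurston intersection number is a sum $A+B+C+D$, and the term $B$ is not local to a single strand: it is a sum over \emph{consecutive} intersection points along $\beta_0$ of a contractibility condition on a triangle built from segments of a loop, so a strand-by-strand decomposition cannot account for it whenever one of $\tau_k,\tau_k',\tau_\ell$ is a loop. Second, the degenerate configurations you list (self-folded triangles, taggings, identifications among the sides of $Q$) are precisely where the identity can fail: Theorem~\ref{D surface} shows that Property~R is false in most surfaces, so the source-sink hypothesis must enter the local analysis in an essential, configuration-restricting way, not merely to prevent cancellation in $|(B_0)_{k\ell}|$. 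The paper resolves both issues at once by first discarding every surface covered by Theorem~\ref{D surface} (where the source-sink case of Property R is already known) and then classifying the possible neighborhoods of a source by assembling the triangulation from tagged puzzle pieces; this leaves exactly four local configurations (Figure~\ref{configurations}), in none of which any relevant arc is a loop (so $B$ can be ignored), and in each of which the identity is verified by a finite table of intersections of $\beta$ with the configuration. Without some such classification, an enumeration of ``chord types'' of $Q$ is a program rather than a proof.
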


\section{Proofs of general results}
\label{general sec}
We begin with the proof of Proposition~\ref{usual M}, followed by the proof of Proposition~\ref{MD init}.
To make the proof of Proposition~\ref{usual M} completely clear, we point out two lemmas about highest powers in multivariate (Laurent) polynomials.
Both are completely obvious when looked at in the right way, but otherwise one might convince oneself to worry.
Given a Laurent polynomial $p$, we write $m_i(p)$ for the highest power of $x_i$ occurring in a term of $p$.

\begin{lemma}
  \label{highest poly}
  Given Laurent polynomials $f$ and $g$ in $x_1,\ldots,x_n$, we have $m_i(fg)=m_i(f)+m_i(g)$.
\end{lemma}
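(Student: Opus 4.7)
The plan is to view each Laurent polynomial as a polynomial in the single variable $x_i$ with coefficients in the Laurent polynomial ring $R = \mathbb{Z}[x_1^{\pm 1},\ldots,\widehat{x_i^{\pm 1}},\ldots,x_n^{\pm 1}]$ in the remaining variables. Once this viewpoint is adopted, the statement reduces to the familiar fact that the degree of a product of nonzero polynomials over an integral domain is the sum of the degrees.

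First I would write $f = \sum_{a} f_a\, x_i^a$ and $g = \sum_{b} g_b\, x_i^b$, where each $f_a$ and $g_b$ lies in $R$ and only finitely many are nonzero. By definition, $m_i(f)$ is the largest $a$ with $f_a\neq 0$, and similarly for $g$. Then I would expand
\[
  fg = \sum_{c}\left(\sum_{a+b=c} f_a\, g_b\right) x_i^c.
\]
The coefficient of $x_i^{m_i(f)+m_i(g)}$ in this sum is exactly $f_{m_i(f)}\, g_{m_i(g)}$, since any other pair $(a,b)$ with $a+b = m_i(f)+m_i(g)$ must have either $a > m_i(f)$ or $b > m_i(g)$, forcing $f_a = 0$ or $g_b = 0$. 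Because $R$ is an integral domain and both leading coefficients $f_{m_i(f)}$ and $g_{m_i(g)}$ are nonzero, their product is nonzero, giving $m_i(fg) \ge m_i(f) + m_i(g)$.

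The reverse inequality is immediate: every term of $fg$ arising from a product $f_a x_i^a \cdot g_b x_i^b$ satisfies $a \le m_i(f)$ and $b \le m_i(g)$, so the resulting power of $x_i$ is at most $m_i(f) + m_i(g)$.

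There is essentially no obstacle here; the only point worth emphasizing is the integral-domain argument, which is precisely what the remark before the lemma alludes to when it says ``otherwise one might convince oneself to worry.'' Without it, one might fear that the leading coefficients in $x_i$ could cancel, but viewing them as nonzero elements of $R$ rules this out.
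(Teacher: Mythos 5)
Your proof is correct and follows essentially the same route as the paper: both arguments view $f$ and $g$ as polynomials in $x_i$ with coefficients in the Laurent polynomial ring in the remaining variables, and both conclude that the leading coefficient of the product, $f_{m_i(f)}g_{m_i(g)}$, is nonzero because that coefficient ring is an integral domain. Your write-up just makes explicit the zero-divisor remark that the paper relegates to a parenthetical.
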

\begin{proof}
  Write $f=f_a x_i^a+f_{a+1}x_i^{a+1}+\cdots+f_kx_i^k$ and $g=g_b x_i^b+g_{b+1}x_i^{b+1}+\cdots+g_\ell x_i^\ell$ such that the $f_j$ and $g_j$ are polynomials in the variables besides $x_i$ and $f_k$ and $g_\ell$ are nonzero.
  Then the highest power of $x_i$ in $fg$ is $k+\ell$.
  (Otherwise $f_k$ and $g_\ell$ are zero divisors.)
\end{proof}
\begin{lemma}
  \label{highest rat}
  Suppose $p$ is a Laurent polynomial over $\complexes$ in $x_1,\ldots,x_n$ and $f$ and $g$ are polynomials in $\complexes[x_1,\ldots,x_n]$ such that $f/g=p$.
  Then $m_i(p)=m_i(f)-m_i(g)$.
\end{lemma}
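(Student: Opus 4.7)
The plan is to reduce this statement to Lemma~\ref{highest poly} by clearing denominators. Starting from the hypothesis $f/g=p$ (an equality of Laurent polynomials), multiply through by $g$ to obtain the identity
\[
f = p\cdot g
\]
in $\complexes[x_1^{\pm1},\ldots,x_n^{\pm1}]$. Both sides now lie in the Laurent polynomial ring, where Lemma~\ref{highest poly} applies directly to give
\[
m_i(f) \;=\; m_i(p) + m_i(g).
\]
Subtracting $m_i(g)$ from both sides yields the claim.

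The only point to verify is that the notation $m_i$ is unambiguous across the rings involved. Since $\complexes[x_1,\ldots,x_n]$ is a subring of $\complexes[x_1^{\pm1},\ldots,x_n^{\pm1}]$, the highest power of $x_i$ appearing in $f$ (resp.~$g$) is the same whether we view these polynomials as honest polynomials or as Laurent polynomials, so there is no ambiguity in writing $m_i(f)$ and $m_i(g)$. Moreover, $m_i(p)$ is a well-defined integer because $p$ has only finitely many monomials.

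There is essentially no obstacle: the content of the lemma is entirely carried by Lemma~\ref{highest poly}, whose proof already addresses the only genuine subtlety (namely, that the leading coefficient of $pg$ in $x_i$ is the product of the leading coefficients in $x_i$ of $p$ and $g$, both of which are nonzero polynomials in the remaining variables, and hence nonzero in the integral domain $\complexes[x_1,\ldots,\widehat{x_i},\ldots,x_n]$, so no cancellation of the top $x_i$-degree term can occur). Consequently, the proof can be written in one or two lines.
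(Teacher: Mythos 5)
Your proof is correct, and it is slightly more direct than the paper's. The paper proves this lemma by factoring $f=a\cdot c$ and $g=b\cdot c$ with $b$ a monomial (possible because $p$ is a Laurent polynomial), reading off $m_i(p)=m_i(a)-m_i(b)$ from the monomial division, and then applying Lemma~\ref{highest poly} twice to cancel the $m_i(c)$ terms. You instead clear denominators to get $f=p\cdot g$ in the Laurent polynomial ring and apply Lemma~\ref{highest poly} once, directly to the pair $(p,g)$. Since that lemma is already stated for Laurent polynomials, your application is legitimate, and the only genuine subtlety --- that the top $x_i$-coefficients cannot cancel because the coefficient ring in the remaining variables is an integral domain --- is exactly the point already handled in the proof of Lemma~\ref{highest poly}, as you note. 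Both arguments rest on the same multiplicativity fact; yours avoids the auxiliary factorization entirely, while the paper's version makes explicit how the Laurent-polynomial hypothesis on $p$ is used (namely, that $g$ divides $f$ up to a monomial).
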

\begin{proof}
  Since $p$ is a Laurent polynomial, we can factor $f$ as $a\cdot c$ and $g$ as $b\cdot c$ such that $b$ is a monomial.
  It is immediate that $m_i(p)=m_i(a)-m_i(b)$.
  Applying Lemma~\ref{highest poly}, we have $m_i(f)-m_i(g)=m_i(a)+m_i(c)-m_i(b)-m_i(c)=m_i(p)$.
\end{proof}

\begin{proof}[Proof of Proposition~\ref{usual M}]
  Throughout this proof, we omit superscripts $B_0;t_0$.
  The first assertion of the proposition is trivial.
  To establish the second assertion, we compute $M_{ij;t'}$, the highest power of $x_i$ occurring in $x_{j;t'}$, in terms of $M_t$.
  If $j\neq k$, then $x_{j;t'}=x_{j;t}$, so $M_{ij;t'}=M_{ij;t}$ as given in the proposition.
  If $j=k$, then the exchange relation \cite[(2.8)]{ca4}, with trivial coefficients, is
  \begin{equation}
    \label{exchange}
    x_{k;t'} =
    \left(x_{k;t}\right)^{-1}
    \biggl(\prod_\ell \left(x_{\ell,t}\right)^{[B_{\ell k;t}]_+}+\prod_\ell \left(x_{\ell,t}\right)^{[-B_{\ell k;t}]_+}\biggr).
  \end{equation}
  Write $U$ for the expression $\prod_\ell \left(x_{\ell,t}\right)^{[B_{\ell  k;t}]_+}+\prod_\ell \left(x_{\ell,t}\right)^{[-B_{\ell k;t}]_+}$.
  Each factor $x_{\ell;t}$ in $U$ has a subtraction-free expression: an expression as a ratio of two polynomials in $x_1,\ldots,x_n$ with nonnegative coefficients.
  Therefore each term in $U$ has a subtraction-free expression.
  Write the first term as $a/c$ and the second term as $b/d$, where $a$, $b$, $c$, and $d$ are polynomials with nonnegative coefficients.
  The sum $U$ is then  $\frac{ad}{cd}+\frac{bc}{cd}$.
  Since all of these expressions are subtraction-free, there is no cancellation, so $m_i(U)=m_i(\frac{ad}{cd}+\frac{bc}{cd})=\max\left(m_i(\frac{ad}{cd}),m_i(\frac{bc}{cd})\right)$, which equals $\max\left(m_i(\frac{a}{c}),m_i(\frac{b}{d})\right)$ which in turn equals
  \[
    \max\Biggl(m_i\biggl(\prod_\ell
    (x_{\ell,t})^{[B_{\ell k;t}]_+}\biggr),m_i\biggl(\prod_\ell
    (x_{\ell,t})^{[-B_{\ell k;t}]_+}\biggr)\Biggr).
  \]
  Returning now to expressions for the $x_{\ell;t}$ as Laurent polynomials, Lemma~\ref{highest poly} lets us conclude that $m_i(U)=\max(\sum_\ell M_{i\ell;t}[B_{\ell k;t}]_+,\sum_\ell M_{i\ell;t}[-B_{\ell k;t}]_+)$.

  Now, writing $x_{k;t}$ as a rational function $p/q$ with $m_i(p)-m_i(q)=m_i(x_{k;t})$ and writing $U$ as a rational function $r/s$ with $m_i(r)-m_i(s)=m_i(U)$, equation~\eqref{exchange} lets us write $x_{k;t'}$ as $\frac{qr}{ps}$, so Lemmas~\ref{highest poly} and~\ref{highest rat} imply that $M_{ik;t'}=m_i(q)-m_i(p)+m_i(r)-m_i(s)=-M_{ik;t}+\max(\sum_\ell M_{i\ell;t}[B_{\ell k;t}]_+,\sum_\ell M_{i\ell;t}[-B_{\ell k;t}]_+)$ as desired.
\end{proof}

\begin{proof}[Proof of Proposition~\ref{MD init}]
  The cluster at $t_1$ is obtained from $(x_1,\ldots,x_n)$ by removing $x_k$ and replacing it with a new cluster variable $x'_k$.
  The two are related by
  \begin{equation}
    \label{init mut}
    x_k =
    (x'_k)^{-1}
    \biggl(\prod_\ell x_\ell^{[b_{\ell k}]_+}+\prod_\ell x_\ell^{[-b_{\ell k}]_+}\biggr),
  \end{equation}
  where the $b_{\ell k}$ are entries of $B_0$.

  To show that the $k\th$ row of $D_t^{B_1;t_1}$ equals the $k\th$ row of $M_t^{B_0;t_0}$, we appeal to the Laurent Phenomenon to write the cluster variable $x_{j;t}^{B_0;t_0}$ in the form $N(x_1,\ldots,x_n)/\prod_ix_i^{D_{ij;t}^{B_0;t_0}}$ for some polynomial $N$ not divisible by any of the $x_i$.
  We write $N=N_0+N_1x_k+\cdots+N_px_k^p$, where the $N_q$ are polynomials not involving $x_k$, with $N_p\neq0$.
  Then \eqref{init mut} lets us write $x_{j;t}^{B_0;t_0}$ as
  \begin{equation}
    \label{xjt subs}
    \frac{N_0+N_1\frac{\left(\prod_\ell x_\ell^{[b_{\ell k}]_+}+\prod_\ell
      x_\ell^{[-b_{\ell k}]_+}\right)}{x'_k}+\cdots+N_p\frac{\left(\prod_\ell
      x_\ell^{[b_{\ell k}]_+}+\prod_\ell x_\ell^{[-b_{\ell
        k}]_+}\right)^p}{(x'_k)^p}}
      {x_1^{D_{1j;t}^{B_0;t_0}}\cdots\left(\frac{\left(\prod_\ell x_\ell^{[b_{\ell
        k}]_+}+\prod_\ell x_\ell^{[-b_{\ell
      k}]_+}\right)}{x'_k}\right)^{D_{kj;t}^{B_0;t_0}}\cdots
      x_n^{D_{nj;t}^{B_0;t_0}}}
  \end{equation}
  The numerator of \eqref{xjt subs} can be factored as $(x'_k)^{-p}$ times a polynomial not divisible by $x'_k$.
  The denominator can be factored as $(x'_k)^{-D_{kj;t}^{B_0;t_0}}$ times a polynomial not involving $x'_k$.
  We conclude that $D_{kj;t}^{B_1,t_1}$ is $-D_{kj;t}^{B_0;t_0}+p$.
  The latter equals $M_{kj;t}^{B_0;t_0}$.

  To show that $D_t^{B_1;t_1}$ agrees with $D_t^{B_0;t_0}$ outside of row $k$, we fix $i\neq k$ and consider a subtraction-free expression for $x_{j;t}^{B_0;t_0}$.
  The Laurent Phenomenon implies that this expression can be simplified to a Laurent polynomial.
  The simplification can, if one wishes, be done in two stages, by first factoring out all powers of $x_i$ from the rational expression and then canceling the other factors.
  After the first stage, we have written $x_{j;t}^{B_0;t_0}$ as $x_i^{-D_{ij;t}^{B_0;t_0}}\cdot\frac{f}{g}$ where $f$ and $g$ are subtraction-free polynomials not divisible by  $x_i$.
  Replacing $x_k$ in this expression by the right side of \eqref{init mut}, we find that no additional powers of $x_i$ can be extracted.
  (Since the right side of \eqref{init mut} is also subtraction-free, we obtain a new subtraction-free expression.
  In particular, there can be no cancellation, so a power of $x_i$ can be extracted if and only if it is a factor in every term of the numerator or a factor in every term of the denominator.
  But the right side of \eqref{init mut} is not divisible by any nonzero power of $x_i$.)
  We conclude that $D_{ij;t}^{B_1,t_1}=D_{ij;t}^{B_0;t_0}$.
\end{proof}

We next prove Theorem~\ref{DRM}.
Specifically, the theorem follows from the next three propositions, which more carefully specify the relations among the three properties.

\begin{proposition}
  \label{D implies R}
  For a fixed choice of $B_0$, $t_0$, $t$ and $k$, let $t_1$ be the vertex of $\T_n$ such that $t_0\dashname{k}t_1$ and write $B_1$ for $\mu_k(B_0)$.
  Suppose equation~\eqref{DD} holds at $B_0,t_0,t$ and also at $B_1,t_1,t$.
  Then equation~\eqref{D backwards} holds for the same $B_0,t_0,t,k$.
\end{proposition}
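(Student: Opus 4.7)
The plan is to dualize: Property \eqref{DD} translates both sides of the target equation \eqref{D backwards} into denominator matrices in the cluster pattern rooted at $t$ with initial exchange matrix $E := (-B_t)^T$. In that pattern, the initial-seed move $t_0 \dashname{k} t_1$ appears as an ordinary final-seed mutation, so the standard recursion \eqref{usual D} does the work.

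To set this up, I would first check that matrix mutation commutes with the involution $B\mapsto(-B)^T$ (both negation and transposition commute with mutation, by direct inspection of the mutation formula). Consequently the cluster pattern rooted at $t$ with initial matrix $E$ has exchange matrix $(-B_0)^T$ at $t_0$ (and $(-B_1)^T$ at $t_1$). Since $B_t^{B_0;t_0} = B_t = B_t^{B_1;t_1}$, the two hypothesized instances of \eqref{DD} read
\[
  (D_t^{B_0;t_0})^T = D_{t_0}^{E;t}, \qquad (D_t^{B_1;t_1})^T = D_{t_1}^{E;t}.
\]
Applying \eqref{usual D} in this dual pattern to the edge $t_0 \dashname{k} t_1$, with $t_0$ playing the role of $t$, yields
\[
  D_{t_1}^{E;t} \;=\; D_{t_0}^{E;t} J_k + \max\bigl(D_{t_0}^{E;t}\,[((-B_0)^T)^{\bullet k}]_+,\; D_{t_0}^{E;t}\,[(B_0^T)^{\bullet k}]_+\bigr).
\]

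The last step is to transpose this identity. Using that $J_k^T=J_k$, that $\max$ and $[\,\cdot\,]_+$ are entrywise (so they commute with transposition), and the elementary identity
\[
  \bigl([((\pm B_0)^T)^{\bullet k}]_+\bigr)^T \;=\; [\pm B_0^{k\bullet}]_+
\]
(which merely records that the $k$-th column of $B_0^T$ is the $k$-th row of $B_0$), the transposed equation combined with the two instances of Property D is precisely \eqref{D backwards}. The main obstacle is not conceptual but clerical: tracking how the column mask $(\bullet k)$ and the row mask $(k\bullet)$ swap roles under transposition, and checking the sign compatibilities in the dual pattern. Once both are in hand, the proof is a one-line dualization of the known final-seed recursion.
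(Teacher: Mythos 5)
Your proposal is correct and is essentially the paper's own argument: apply Property D at $B_1,t_1,t$ to pass to the dual pattern rooted at $t$ with initial matrix $(-B_t)^T$, run the final-seed recursion \eqref{usual D} along the edge $t_0\dashname{k}t_1$ there (using $B_{t_0}^{(-B_t)^T;t}=-B_0^T$), transpose, and finish with Property D at $B_0,t_0,t$. The bookkeeping you flag (the $\bullet k$ versus $k\bullet$ masks swapping under transposition) is handled exactly as you describe.
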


\begin{proof}
  We apply \eqref{DD} at $B_1,t_1,t$, then \eqref{usual D}, then \eqref{DD} at $B_0,t_0,t$.
  \begin{align*}
    D_t^{B_1;t_1}
      & =\left(D_{t_1}^{(-B_t)^T;t}\right)^T       \\
      & =\left(D_{t_0}^{(-B_t)^T;t}J_k + \max\left(D_{t_0}^{(-B_t)^T;t}[(B_0^T)^{\bullet k}]_+,\,\, D_{t_0}^{(-B_t)^T;t}[(-B_0^T)^{\bullet k}]_+ \right)\right)^T\\
      & =J_k\left(D_{t_0}^{(-B_t)^T;t}\right)^T+\! \max\left([B_0^{k\bullet}]_+\left(D_{t_0}^{(-B_t)^T;t}\right)^T,\,\, [-B_0^{k\bullet}]_+\left(D_{t_0}^{(-B_t)^T;t}\right)^T \right)\\
      & =J_kD_t^{B_0;t_0} + \max\left([B_0^{k\bullet}]_+D_t^{B_0;t_0},\,\, [-B_0^{k\bullet}]_+D_t^{B_0;t_0} \right)
  \end{align*}
  In the second line, we use the fact that $B_{t_0}^{(-B_t)^T;t}=-B_0^T$.
\end{proof}

\begin{proposition}
  \label{R implies D}
  Fix a (coefficient-free) cluster pattern $t\mapsto(B_t,(x_{1;t},\ldots,x_{n;t}))$ and vertices $t_0$ and $t$ of $\T_n$, connected by edges
  \[
    t_0\dashname{k_1=k}t_1\dashname{k_2}\cdots\dashname{k_m}t_m=t.
  \]
  Suppose that, for all $i=1,\ldots,m$, equation~\eqref{D backwards} holds for the edge $t_{i-1}\dashname{k_i}t_i$.
  Then $\bigl(D_t^{B_{t_0};t_0}\bigr)^T=D_{t_0}^{(-B_t)^T;t}$.
\end{proposition}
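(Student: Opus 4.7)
The plan is to induct on $m$, the length of the path $t_0,t_1,\ldots,t_m=t$.  When $m=0$ both sides equal the initial $D$-matrix $-I$, so the base case is immediate.

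For the inductive step, set $B_1=\mu_{k_1}(B_0)$ and apply the induction hypothesis to the subpath $t_1\dashname{k_2}\cdots\dashname{k_m}t_m$ of length $m-1$ to obtain
\[(D_t^{B_1;t_1})^T=D_{t_1}^{(-B_t)^T;t}.\]
I would then run the usual final-seed recursion \eqref{usual D} one step, from $t_1$ to $t_0$, but inside the cluster pattern with initial data $(-B_t)^T$ at $t$.  As in Proposition~\ref{D implies R}, the exchange matrix at $t_1$ in this pattern is $-B_1^T$.  Using the mutation identity $(B_1^T)^{\bullet k_1}=-(B_0^T)^{\bullet k_1}$ to rewrite $\pm B_1^T$ in terms of $B_0^T$, and substituting the induction hypothesis, this yields an explicit formula for $D_{t_0}^{(-B_t)^T;t}$ in terms of $(D_t^{B_1;t_1})^T$ and $\pm B_0^T$.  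Separately, taking the transpose of Property R for the edge $t_0\dashname{k_1}t_1$ (using $[B_0^{k_1\bullet}]_+^T=[(B_0^T)^{\bullet k_1}]_+$) gives a parallel expression for $(D_t^{B_1;t_1})^T$ in terms of $(D_t^{B_0;t_0})^T$.

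The heart of the argument is then to check that these two expressions force $(D_t^{B_0;t_0})^T=D_{t_0}^{(-B_t)^T;t}$.  Writing $A=(D_t^{B_0;t_0})^T$, $A'=(D_t^{B_1;t_1})^T$, and $M(X):=\max(X[(B_0^T)^{\bullet k_1}]_+,\,X[(-B_0^T)^{\bullet k_1}]_+)$, the transposed Property R reads $A'=AJ_{k_1}+M(A)$ and the final-seed computation reads $D_{t_0}^{(-B_t)^T;t}=A'J_{k_1}+M(A')$.  The main obstacle is inverting Property R for $A$.  It hinges on the observation that the $k_1$-entry of the $k_1$-column of $(B_0^T)^{\bullet k_1}$ equals $(B_0)_{k_1,k_1}=0$, so $M(X)$ depends only on the columns of $X$ other than column $k_1$.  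Since $A$ and $A'$ agree outside column $k_1$ (because $M(A)$ is supported in column $k_1$ and $J_{k_1}$ only touches column $k_1$), we conclude $M(A)=M(A')$.  Combined with $J_{k_1}^2=I$ and $M(A)J_{k_1}=-M(A)$ (again by column-$k_1$ support), this gives $A'J_{k_1}+M(A')=A$, closing the induction.
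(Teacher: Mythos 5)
Your proof is correct and follows essentially the same route as the paper: induction on the path length, with the inductive step combining the induction hypothesis at $t_1$ with one step of the final-seed recursion \eqref{usual D} in the pattern with initial data $(-B_t)^T$ at $t$. The only difference is cosmetic: where the paper invokes the symmetry of \eqref{D backwards} under reversing the edge $t_0\dashname{k}t_1$ (justified by $B_0^{k\bullet}=-B_1^{k\bullet}$), you explicitly invert the relation $A'=AJ_{k_1}+M(A)$ using the observation that $M$ ignores the $k_1$ column --- which is in fact the detailed justification of that same symmetry.
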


\begin{proof}
  We argue by induction on $m$.
  For $m=0$ (i.e.\ $t=t_0$), equation~\eqref{DD} says that the negative of the identity matrix is symmetric.
  Equation \eqref{D backwards} is symmetric in switching $t_0$ and $t_1$, because $B_0^{k\bullet}=-B_1^{k\bullet}$.
  Thus for $m>0$, we can use \eqref{D backwards} for the edge $t_1\dashname{k_1} t_0$ to write
  \begin{equation}
    \label{use D backwards}
    D_t^{B_0;t_0}= J_kD_t^{B_1;t_1}+\max\Bigl([B_1^{k\bullet}]_+D_t^{B_1;t_1},\,\, [-B_1^{k\bullet}]_+D_t^{B_1;t_1} \Bigr)
  \end{equation}
  By induction, we rewrite the right side of \eqref{use D backwards} as
  \begin{align*}
    & J_k\Bigl(D_{t_1}^{(-B_t)^T;t}\Bigr)^T+\max\left([B_1^{k\bullet}]_+\left(D_{t_1}^{(-B_t)^T;t}\right)^T,\,\, [-B_1^{k\bullet}]_+\left(D_{t_1}^{(-B_t)^T;t}\right)^T \right) \\
    & \qquad= \left(D_{t_1}^{(-B_t)^T;t}J_k+\max\left(D_{t_1}^{(-B_t)^T;t}[(B_1^T)^{\bullet k}]_+,\,\, D_{t_1}^{(-B_t)^T;t}[(-B_1^T)^{\bullet k}]_+\right)\right)^T.
  \end{align*}
  By \eqref{usual D}, this is $\bigl(D_{t_0}^{(-B_t)^T;t}\bigr)^T$.
\end{proof}

\begin{proposition}
  \label{R iff M}
  For a fixed choice of $B_0$, $t_0$, $t$ and $k$, equation~\eqref{D backwards} holds if and only if equation~\eqref{MD} holds \emph{in the $k\th$ row}.
\end{proposition}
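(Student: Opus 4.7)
The plan is to reduce both sides of the claimed equivalence to a single row identity by exploiting Proposition~\ref{MD init} together with the sparsity of the matrices $B_0^{k\bullet}$ and $J_k$. The key point is that equation~\eqref{D backwards} only ever gives new information in its $k$th row; outside of row $k$ it is a tautology.

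First I would examine \eqref{D backwards} row by row. Since $J_k$ differs from the identity only in the $kk$-entry, the $i$th row of $J_kD_t^{B_0;t_0}$ coincides with the $i$th row of $D_t^{B_0;t_0}$ whenever $i\neq k$. Since $B_0^{k\bullet}$ has zeros outside row $k$, the same is true of $[B_0^{k\bullet}]_+$ and $[-B_0^{k\bullet}]_+$, and hence the two matrices inside the $\max$ in \eqref{D backwards} vanish outside row $k$. So for $i\neq k$, the $i$th row of the right-hand side of \eqref{D backwards} equals the $i$th row of $D_t^{B_0;t_0}$. By Proposition~\ref{MD init}, the $i$th row of $D_t^{B_1;t_1}$ is also the $i$th row of $D_t^{B_0;t_0}$ for $i\neq k$. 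Thus \eqref{D backwards} automatically holds in every row except possibly the $k$th, independent of any hypothesis.

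Next I would examine the $k$th row. Proposition~\ref{MD init} identifies the $k$th row of $D_t^{B_1;t_1}$ with the $k$th row of $M_t^{B_0;t_0}$, so the $(k,j)$-entry of the left-hand side of \eqref{D backwards} is $M_{kj;t}^{B_0;t_0}$. For the right-hand side, the $(k,j)$-entry of $J_kD_t^{B_0;t_0}$ is $-D_{kj;t}^{B_0;t_0}$, while
\[
\bigl([B_0^{k\bullet}]_+D_t^{B_0;t_0}\bigr)_{kj}=\sum_\ell [(B_0)_{k\ell}]_+\,D_{\ell j;t}^{B_0;t_0}
\]
(and similarly with signs reversed), which coincides with the $(k,j)$-entry of $[B_0]_+D_t^{B_0;t_0}$ since rows of $[B_0]_+$ other than the $k$th do not affect the $k$th row of the product. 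Therefore the $(k,j)$-entry of the right-hand side of \eqref{D backwards} equals the $(k,j)$-entry of the right-hand side of \eqref{MD}, namely
\[
-D_{kj;t}^{B_0;t_0}+\max\Bigl(\sum_\ell [(B_0)_{k\ell}]_+D_{\ell j;t}^{B_0;t_0},\,\,\sum_\ell [-(B_0)_{k\ell}]_+D_{\ell j;t}^{B_0;t_0}\Bigr).
\]

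Combining the two observations, equation~\eqref{D backwards} is equivalent to the single row equality $M_{kj;t}^{B_0;t_0}$ equals the expression displayed above for every $j$, which is exactly the $k$th row of \eqref{MD}. This establishes the proposition. There is no real obstacle here: the whole content is bookkeeping about which rows of which matrices are nonzero, and the nontrivial input, namely that $D_t^{B_1;t_1}$ and $D_t^{B_0;t_0}$ differ only in row $k$ (where the difference is precisely $M_t^{B_0;t_0}$), has already been supplied by Proposition~\ref{MD init}.
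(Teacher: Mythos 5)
Your proof is correct and follows essentially the same route as the paper: restrict both identities to their $k$th rows (noting that $[B_0^{k\bullet}]_+D$ and $[B_0]_+D$ have the same $k$th row and that everything outside row $k$ is tautological), then invoke Proposition~\ref{MD init} to identify the $k$th row of $D_t^{B_1;t_1}$ with that of $M_t^{B_0;t_0}$. You have merely written out explicitly the bookkeeping that the paper compresses into one sentence.
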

\begin{proof}
  Equation~\eqref{MD} holds in the $k\th$ row if and only if
  \begin{equation*}
    \label{MD k}
    (M_t^{B_0;t_0})^{k\bullet}=(-D_t^{B_0;t_0})^{k\bullet}+\max\left([B_0^{k\bullet}]_+D_t^{B_0;t_0},\,\, [-B_0^{k\bullet}]_+D_t^{B_0;t_0} \right).\\
  \end{equation*}
  This equation is equivalent to \eqref{D backwards} in light of Proposition~\ref{MD init}.
\end{proof}

This completes the proof of Theorem~\ref{DRM}.

To conclude this section, we establish Proposition~\ref{equiv conj} by proving a more detailed statement.
Recall that a matrix $D$ has signed columns if every column of $D$ either has all nonnegative entries or all nonpositive entries.
\begin{prop}
  \label{equiv conj detailed}
  Suppose $t_0\dashname{k}t_1$ is an edge in $\T_n$ and $B_1$ is $\mu_k(B_0)$.
  If $D_t^{B_0;t_0}$ has signed columns, then the right side of \eqref{D backwards source-sink} equals $\sigma_kD_t^{B_0;t_0}$.
\end{prop}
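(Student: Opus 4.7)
The plan is to unwind both sides of the desired identity column-by-column and entry-by-entry, and reduce the problem to a small inequality that is immediate from the signed-columns hypothesis. Write $D$ for $D_t^{B_0;t_0}$ and let $b_{k\ell}$ denote the entries of $B_0$. Since $J_k$ only negates row $k$ and $|B_0^{k\bullet}|$ has zeros outside of row $k$, the matrix on the right side of \eqref{D backwards source-sink} agrees with $D$ in every row $i\neq k$, and its $(k,j)$-entry is
\[
  -D_{kj}+\Bigl[\textstyle\sum_\ell |b_{k\ell}|\,D_{\ell j}\Bigr]_+.
\]

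First I would show that $\sigma_k D$ has exactly the same shape. By definition $\sigma_k$ acts on each column of $D$: if $\beta$ is the $j\th$ column, then $[\sigma_k(\beta):\alpha_i]=[\beta:\alpha_i]=D_{ij}$ for $i\neq k$, while
\[
  [\sigma_k(\beta):\alpha_k]=-D_{kj}+\textstyle\sum_\ell |b_{k\ell}|\,[D_{\ell j}]_+.
\]
So the proposition reduces to the column-by-column equality
\[
  \Bigl[\textstyle\sum_\ell |b_{k\ell}|\,D_{\ell j}\Bigr]_+=\textstyle\sum_\ell |b_{k\ell}|\,[D_{\ell j}]_+
\]
for each $j$.

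The final step is a two-case check using the hypothesis that the $j\th$ column of $D$ has signed entries. If all $D_{\ell j}$ are nonnegative, then $[D_{\ell j}]_+=D_{\ell j}$ and the sum $\sum_\ell |b_{k\ell}|\,D_{\ell j}$ is itself nonnegative, so $[\,\cdot\,]_+$ is the identity on it and both sides are equal. If all $D_{\ell j}$ are nonpositive, then $[D_{\ell j}]_+=0$ for every $\ell$ (making the right side zero), while $\sum_\ell |b_{k\ell}|\,D_{\ell j}\le 0$ (making the left side also zero).

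There is no substantive obstacle here; the argument is essentially just the observation that $[\,\cdot\,]_+$ commutes with a nonnegative linear combination whenever the summands have a common sign. The only place the signed-columns hypothesis enters is in ruling out mixed-sign cancellations inside $\sum_\ell |b_{k\ell}|\,D_{\ell j}$, which is exactly what makes the two expressions coincide.
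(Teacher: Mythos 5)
Your proof is correct and is essentially the paper's own argument: both sides agree with $D_t^{B_0;t_0}$ outside row $k$, and the $k\th$-row identity reduces to $\bigl[\sum_\ell |b_{k\ell}|D_{\ell j}\bigr]_+=\sum_\ell |b_{k\ell}|[D_{\ell j}]_+$, which holds because the signed-columns hypothesis rules out mixed signs in the sum. The only difference is cosmetic: the paper phrases the computation in terms of the root-lattice vector $\beta$ whose simple root coordinates form the $j\th$ column, while you work directly with matrix entries.
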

\begin{proof}
  Let $\beta$ be the vector in the root lattice with simple root coordinates $\d_{j;t}^{B_0;t_0}$.
  For $i\neq k$, the $ij$-entry of the right side of \eqref{D backwards source-sink} is $[\beta:\alpha_i]$.
  The $kj$-entry of the right side of \eqref{D backwards source-sink} is $-[\beta:\alpha_k]+\Bigl[\sum_{\ell=1}^n|(B_0)_{k\ell}|[\beta:\alpha_\ell]\Bigr]_+.$
  By hypothesis, all of the simple root coordinates of $\beta$ weakly agree in sign, so $\bigl[\sum_{\ell=1}^n|(B_0)_{k\ell}|[\beta:\alpha_\ell]\bigr]_+$ is $\sum_{\ell=1}^n|(B_0)_{k\ell}|\bigl[[\beta:\alpha_\ell]\bigr]_+$.
  Thus the right side of \eqref{D backwards source-sink} is $\sigma_k\beta$.
\end{proof}

\section{Duality and recursion in certain cluster algebras}
\label{special sec}
We now prove Theorems~\ref{D rk 2}, \ref{D finite}, and~\ref{D surface}.

\subsection{Rank two}
The proof of Theorem~\ref{D rk 2} uses a formula for rank-two denominator vectors due to Lee, Li, and Zelevinsky \cite[(1.13)]{LLZ}.

\begin{proof}[Proof of Theorem~\ref{D rk 2}]
  The $2$-regular tree $\T_n$ is an infinite path.
  We label its vertices $t_k$ for $k\in\integers$, and abbreviate $B_{t_k}$ by $B_k$.
  As the situation is very symmetric, it is enough to take $B_0=\begin{bsmallmatrix}0&b\\-c&0\end{bsmallmatrix}$ with $b$ and $c$ nonnegative and establish \eqref{DD} for $t=t_k$ with $k\ge0$.
  When $bc<4$, the cluster pattern is of finite type and \eqref{DD} can be checked easily (if a bit tediously) by hand.
  Alternatively, one can appeal to Theorem~\ref{D finite}, which we prove below.
  For $bc\ge4$, the denominator vectors are given by \cite[(1.13)]{LLZ}.
  Equation~\eqref{DD} is easy when $k=1$, so we assume $k\ge 2$.
  The labeled cluster associated to the vertex $t_k$ is $\{x_{k+1},x_{k+2}\}$ if $k$ is even and $\{x_{k+2},x_{k+1}\}$ if $k$ is odd.

  If $k$ is even, we use \cite[(1.13)]{LLZ} to write
  \begin{equation}
    \label{Dt even}
    D_{t_k}^{B_0;t_0}=\begin{bmatrix}
      S_{\frac{k-2}{2}}(u)+S_{\frac{k-4}{2}}(u)   &bS_{\frac{k-2}{2}}(u)\\
      cS_{\frac{k-4}{2}}(u)    &S_{\frac{k-2}{2}}(u)+S_{\frac{k-4}{2}}(u)
    \end{bmatrix}
  \end{equation}
  where $u=bc-2$ and the $S_p$ are Chebyshev polynomials of the second kind.
  (In fact, here we do not need to know anything about the $S_p$ except that they are functions of $u$.)
  We can similarly use \cite[(1.13)]{LLZ} to write an expression for $D_{t_0}^{-B_k^T;t_k}$.
  Since $k$ is even $B_k=B_0$, and thus $-B_k^T=-B_0^T=\begin{bsmallmatrix}0&c\\-b&0\end{bsmallmatrix}$.
  To apply \cite[(1.13)]{LLZ} in this case, we must switch the role of $b$ and $c$.
  When we do so, keeping in mind that we move now in the negative direction, we obtain exactly the transpose of the right side of \eqref{Dt even}.

  If $k$ is odd, we obtain
  \begin{equation}
    \label{Dt odd}
    D_{t_k}^{B_0;t_0}=\begin{bmatrix}
      S_{\frac{k-1}{2}}(u)+S_{\frac{k-3}{2}}(u)   &bS_{\frac{k-3}{2}}(u)\\
      cS_{\frac{k-3}{2}}(u)    &S_{\frac{k-3}{2}}(u)+S_{\frac{k-5}{2}}(u)
    \end{bmatrix}
  \end{equation}
  In this case, $B_k=-B_0$, so $-B_k^T=B_0^T=\begin{bsmallmatrix}0&-c\\b&0\end{bsmallmatrix}$.
  Noticing that $-B_k^T$ is obtained from $B_0$ by simultaneously swapping the rows and the columns, when we use \cite[(1.13)]{LLZ} to write an expression for $D_{t_0}^{-B_k^T;t_k}$, we also swap the rows and columns.
  The result is exactly the transpose of the right side of \eqref{Dt odd}.
\end{proof}

\subsection{Finite type}
The proof of Theorem~\ref{D finite} uses a result of Ceballos and Pilaud~\cite{CP} giving denominator vectors in finite type, with respect to any initial seed, in terms of the compatibility degrees defined at any acyclic seed.
In \cite{ca2}, it is shown that in every cluster pattern of finite type, there exists an exchange matrix $B_0$ that is bipartite and whose Cartan companion $A$ is of finite type.
The cluster variables appearing in the cluster pattern are in bijection with the almost positive roots in the root system for $A$.
Given an almost positive root $\beta$, we will write $x(\beta)$ for the corresponding cluster variable.
There is a \newword{compatibility degree} $(\alpha,\beta)\mapsto(\alpha\parallel\beta)\in\integers_{\ge0}$ defined on almost positive roots encoding some of the combinatorial properties of the cluster algebra. In particular two cluster variables $x(\alpha)$ and $x(\beta)$ belong to the same cluster if and only if the roots $\alpha$ and $\beta$ are \newword{compatible} (i.e. if their compatibility degree is zero).
Maximal sets of compatible roots are called \newword{(combinatorial) clusters} and they correspond to the \newword{(algebraic) clusters} in the cluster algebra.
In the same paper Fomin and Zelevinsky also showed that compatibility degrees encode denominator vectors with respect to the bipartite initial seed.

Ceballos and Pilaud extended this result dramatically in the following result, which is \cite[Corollary~3.2]{CP}.
(We follow Ceballos and Pilaud in modifying the definition of compatibility degree in an inconsequential way in order to make it easier to state the theorem.
Specifically, we take $(\alpha\parallel\alpha)=-1$ rather than $(\alpha\parallel\alpha)=0$.)

\begin{theorem}
  \label{CP cor}
  Let $\set{\beta_1,\ldots,\beta_n}$ be a cluster and let $\gamma$ be an almost positive root.
  Then the $\d$-vector of $x(\gamma)$ with respect to the cluster $\set{x(\beta_1),\ldots,x(\beta_n)}$ is given by $[(\beta_1\parallel\gamma),\ldots,(\beta_n\parallel\gamma)]$.
\end{theorem}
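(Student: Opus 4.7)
The plan is to prove the formula by induction on the distance in the cluster complex between the cluster $\set{\beta_1,\ldots,\beta_n}$ and the bipartite initial cluster $\set{-\alpha_1,\ldots,-\alpha_n}$ for which $B_0$ is the exchange matrix.  The base case is the denominator-vector formula of Fomin and Zelevinsky from \cite{ca2}, and the inductive step compares the final-seed $\d$-vector recursion \eqref{usual D} against a corresponding mutation formula for compatibility degrees.

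For the base case, take $\beta_i=-\alpha_i$ for each $i$.  The (modified) compatibility degree satisfies $(-\alpha_i\parallel\gamma)=[\gamma:\alpha_i]$ for every almost positive root $\gamma$: for $\gamma$ a positive root this is the standard Fomin--Zelevinsky formula for the compatibility of a negative simple root with a positive root; for $\gamma=-\alpha_i$ it is the convention $(\alpha\parallel\alpha)=-1$ paired with $[-\alpha_i:\alpha_i]=-1$; and for $\gamma=-\alpha_j$ with $j\neq i$ both sides vanish because distinct negative simple roots are compatible.  Fomin and Zelevinsky proved in \cite{ca2} that the $\d$-vector of $x(\gamma)$ with respect to the cluster $\set{x(-\alpha_1),\ldots,x(-\alpha_n)}$ equals the simple root expansion $([\gamma:\alpha_1],\ldots,[\gamma:\alpha_n])$.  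This is exactly the desired formula.

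For the inductive step, suppose the theorem holds at a cluster $C=\set{\beta_1,\ldots,\beta_n}$ carrying exchange matrix $B=(b_{ij})$, and let $C'$ be obtained by mutating at position $k$, replacing $\beta_k$ with the unique almost positive root $\beta_k'$ that completes $\set{\beta_i:i\neq k}$ to a cluster.  Applying~\eqref{usual D} expresses the $\d$-vector of $x(\gamma)$ with respect to $C'$ in terms of its $\d$-vector with respect to $C$ and the $k\th$ column of $B$.  By the inductive hypothesis these $\d$-vector entries are compatibility degrees, so the step reduces to proving, for each almost positive root $\gamma$, the compatibility-degree mutation identity
\[
(\beta_k'\parallel\gamma)=-(\beta_k\parallel\gamma)+\max\!\left(\sum_i[b_{ik}]_+(\beta_i\parallel\gamma),\,\sum_i[-b_{ik}]_+(\beta_i\parallel\gamma)\right),
\]
with $(\beta_i\parallel\gamma)$ unchanged for $i\neq k$.

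The main obstacle is this compatibility-degree mutation identity, which is an intrinsically root-theoretic statement about the cluster complex.  One natural line of attack is to localize to the rank-two subcomplex determined by the exchange pair $\set{\beta_k,\beta_k'}$: the remaining $\beta_i$ are present in both $C$ and $C'$, so their compatibility degrees with $\gamma$ enter only as fixed parameters, and the identity should reduce to an explicit calculation in a rank-two root subsystem, where the Chebyshev-polynomial formulas underlying Theorem~\ref{D rk 2} become available.  Verifying that such a reduction really captures the correct combinatorics in every finite type is the delicate point; it could either be done uniformly using the generalized associahedron structure of the cluster complex, or handled type-by-type using the Fomin--Zelevinsky classification of cluster algebras of finite type.
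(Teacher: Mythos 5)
The paper does not actually prove this statement: it is imported verbatim as \cite[Corollary~3.2]{CP}, so there is no internal proof to compare against. Judged on its own terms, your proposal has a sensible skeleton---induction along the exchange graph starting from the bipartite initial cluster, with the Fomin--Zelevinsky denominator formula from \cite{ca2} as the base case---and your base-case bookkeeping, including the check that the modified convention $(\alpha\parallel\alpha)=-1$ matches $[-\alpha_i:\alpha_i]=-1$, is correct. But the argument is not a proof. All of the content of the theorem is concentrated in the compatibility-degree mutation identity
\[
(\beta_k'\parallel\gamma)=-(\beta_k\parallel\gamma)+\max\Bigl(\sum_i[b_{ik}]_+(\beta_i\parallel\gamma),\,\sum_i[-b_{ik}]_+(\beta_i\parallel\gamma)\Bigr),
\]
and you do not establish it; you name two possible strategies and explicitly concede that the ``delicate point'' is unresolved. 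Given the base case, this identity is logically equivalent to the theorem itself, so deferring it leaves the induction unclosed.

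The specific reduction you suggest is also doubtful as stated. The right-hand side of the identity takes a $\max$ of two sums ranging over \emph{all} cluster elements $\beta_i$, weighted by the $k\th$ column of $B$, and which of the two sums wins depends on how $\gamma$ sits relative to the entire cluster, not just relative to the exchange pair $\set{\beta_k,\beta_k'}$. So the quantities $(\beta_i\parallel\gamma)$ for $i\neq k$ are not inert ``fixed parameters'' that can be stripped away before localizing to a rank-two subsystem; the rank-two Chebyshev formulas of \cite{LLZ} compute denominator vectors inside a genuine rank-two cluster algebra and do not directly control an exchange pair embedded in higher rank. A type-by-type verification would in principle work but is not carried out. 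In short, you have correctly identified the shape of a proof, but the one nontrivial step---which is exactly what Ceballos and Pilaud prove in \cite{CP}---is missing.
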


Since $B_0$ is skew-symmetrizable, passing from $B_0$ to $-B_0^T$ has the effect of preserving the signs of entries while transposing the Cartan companion  $A$.
The almost positive roots for $A^T$ are the almost positive co-roots associated to $A$.
The following is \cite[Proposition~3.3(1)]{ga}.

\begin{prop}
  \label{compat ck}
  If $\alpha$ and $\beta$ are almost positive roots and $\alpha\ck$ and $\beta\ck$ are the corresponding co-roots, then $(\alpha\parallel\beta)=(\beta\ck\parallel\alpha\ck)$.
\end{prop}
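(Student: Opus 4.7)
I would prove Proposition~\ref{compat ck} by exploiting the characterization of compatibility degree, due to Fomin and Zelevinsky, as the unique function on pairs of almost positive roots satisfying two properties: an initial condition giving $(-\alpha_i\parallel\beta)=[\beta:\alpha_i]_+$ (with the Ceballos--Pilaud modification $(-\alpha_i\parallel-\alpha_i)=-1$), and invariance under the two ``rotations'' $\tau_\pm$ on $\Phi_{\ge-1}$ coming from the bipartition of the Dynkin diagram. This uniqueness reduces the proof to checking the two properties for the candidate dual function $g(\alpha,\beta):=(\beta\ck\parallel\alpha\ck)$.

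The first task is to confirm that the duality $\alpha\mapsto\alpha\ck$ intertwines the rotations on $\Phi_{\ge-1}$ with those on $(\Phi\ck)_{\ge-1}$. Since $\tau_\pm$ is built from simple reflections indexed by the two sides of the Dynkin bipartition, and the Dynkin bipartition is the same for $\Phi$ and $\Phi\ck$, this intertwining reduces to the standard Weyl-equivariance of the map $\beta\mapsto\beta\ck$. Consequently $g$ is automatically $\tau_\pm$-invariant on $\Phi_{\ge-1}\times\Phi_{\ge-1}$, so by the uniqueness it remains only to check that $g$ satisfies the right initial condition, namely $g(-\alpha_i,\beta)=(\beta\ck\parallel-\alpha_i\ck)=[\beta:\alpha_i]_+$.

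The main obstacle is this last identity, because $(\beta\ck\parallel-\alpha_i\ck)$ has the negative simple co-root in the \emph{second} argument, where the initial condition for compatibility degrees in $\Phi\ck$ does not apply directly. I would attack this by using $\tau_\pm$-invariance in $\Phi\ck$ to carry $-\alpha_i\ck$ into the first slot, producing a pair $(-\alpha_j\ck,\tau\beta\ck)$ whose compatibility degree can be read off as $[\tau\beta\ck:\alpha_j\ck]_+$ from the initial condition, and then verifying that this coefficient equals $[\beta:\alpha_i]_+$ via the explicit action of $\tau_\pm$ on simple (co-)root coordinates. The combinatorial bookkeeping needed to align simple root coordinates of an iterated rotation in $\Phi$ with simple co-root coordinates of the corresponding iterated rotation in $\Phi\ck$ is the principal technical difficulty. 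A cleaner fallback, should the uniform argument prove stubborn, is a case-by-case verification via the finite-type classification: the simply-laced cases are immediate because $\alpha=\alpha\ck$ and compatibility degree is symmetric, while $B/C$, $F_4$, and $G_2$ can each be treated by explicit combinatorial or polytopal models for compatibility degrees.
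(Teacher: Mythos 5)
The paper does not actually prove Proposition~\ref{compat ck}; it is quoted directly as \cite[Proposition~3.3(1)]{ga}, so there is no in-paper argument to compare yours against. On its own terms, your reduction is structurally sound: the compatibility degree is indeed the unique function on pairs of almost positive roots determined by the initial condition $(-\alpha_i\parallel\beta)=[\beta:\alpha_i]_+$ together with $\tau_\pm$-invariance; the map $\beta\mapsto\beta\ck$ is $W$-equivariant, matches negative simple roots with negative simple co-roots, and respects the (shared) bipartition, so it does intertwine $\tau_\pm$ with its co-root counterpart; hence $g(\alpha,\beta):=(\beta\ck\parallel\alpha\ck)$ is automatically $\tau_\pm$-invariant and everything reduces to the initial condition for $g$.

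The genuine gap is that the remaining step, $(\beta\ck\parallel-\alpha_i\ck)=[\beta:\alpha_i]_+$, is not bookkeeping to be deferred: by $\tau_\pm$-invariance it is \emph{equivalent} to the full proposition, so at this point your plan has only restated the problem. The identity is nontrivial precisely because it mixes root and co-root coordinates (in non-simply-laced type $[\beta\ck:\alpha_i\ck]$ differs from $[\beta:\alpha_i]$ by the length ratio $(\alpha_i,\alpha_i)/(\beta,\beta)$), and one cannot invoke $(-\alpha_i\ck\parallel\beta\ck)=[\beta\ck:\alpha_i\ck]_+$ and ``flip the arguments,'' since that flip is the duality being proved. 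Concretely, what must be shown is: if $\tau$ is a composition of $\tau_\pm$'s with $\tau(-\alpha_j)=\beta$, then $\bigl[(\tau\ck)^{-1}(-\alpha_i\ck):\alpha_j\ck\bigr]_+=[\beta:\alpha_i]_+$, a transpose-invariance statement for the piecewise-linear maps $\sigma_k$ that is the real content of the result and is nowhere established in your sketch. The fallback is also problematic: in \cite{ga} the simply-laced symmetry $(\alpha\parallel\beta)=(\beta\parallel\alpha)$ is \emph{deduced from} part (1) of the very proposition you are proving, so quoting it risks circularity unless you supply an independent proof (e.g.\ the crossing-diagonals model in type $A$), and ``explicit combinatorial or polytopal models'' for $B/C$, $F_4$, and $G_2$ is a promise rather than an argument.
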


\begin{proof}[Proof of Theorem~\ref{D finite}]
  The cluster pattern assigns some algebraic cluster to $t_0$ and some algebraic cluster to $t$, and each of the algebraic clusters is encoded by some combinatorial cluster.
  Let $\set{\beta_1,\ldots,\beta_n}$ be the combinatorial cluster at $t_0$ and let $\set{\gamma_1,\ldots,\gamma_n}$ be the combinatorial cluster at $t$.
  Now Theorem~\ref{CP cor} and Proposition~\ref{compat ck} are exactly Property D at $t_0$ and $t$.
\end{proof}

\subsection{Marked surfaces}
\label{D surface sec}
The proof of Theorem~\ref{D surface} relies on a result of Fomin, Shapiro, and Thurston \cite[Theorem~8.6]{FST} giving denominator vectors in terms of tagged arcs.
We will assume familiarity with the basic definitions of cluster algebras arising from marked surfaces.

Recall that tagged arcs are in bijection with cluster variables and tagged triangulations are in bijection with clusters, except in the case of once-punctured surfaces with no boundary components, where plain-tagged arcs are in bijection with cluster variables and plain-tagged triangulations are in bijection with clusters.
We write $\alpha\mapsto x(\alpha)$ for this bijection.
Given tagged arcs $\alpha$ and $\beta$, there is an \newword{intersection number} $(\alpha|\beta)$ such that the following theorem \cite[Theorem~8.6]{FST} holds.
\begin{theorem}
  \label{FST denom}
  Given tagged arcs $\alpha$ and $\beta$ and a cluster $(x_1,\ldots,x_n)$ with $x_i=x(\alpha)$, the $i\th$ component of the denominator vector of $x(\beta)$ with respect to the cluster $(x_1,\ldots,x_n)$ is $(\alpha|\beta)$.
\end{theorem}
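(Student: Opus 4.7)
The plan is to prove Theorem~\ref{FST denom} by an induction-friendly combinatorial argument: first pick a tagged triangulation $T=\set{\alpha_1,\ldots,\alpha_n}$ such that $\alpha=\alpha_i$ for some $i$, so that $(x_1,\ldots,x_n)=(x(\alpha_1),\ldots,x(\alpha_n))$ is the cluster associated to $T$. The quantity we must control is the denominator of $x(\beta)$ as an irreducible Laurent monomial in the $x_j$. My strategy is to obtain an explicit expansion formula for $x(\beta)$ in the cluster of $T$, read off the denominator, and show it is exactly $\prod_j x_j^{(\alpha_j\mid\beta)}$.

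First I would reduce to the case where $\beta\notin T$, since when $\beta=\alpha_j$ the statement reads $(\alpha_i\mid\alpha_j)=-\delta_{ij}$, which is immediate from the convention that distinct arcs of a triangulation are non-crossing and $(\alpha\mid\alpha)=-1$. For the main case, I would treat the unpunctured situation first and invoke the snake-graph expansion formula: enumerate in order the transverse crossings of $\beta$ with the arcs of $T$, build the associated snake graph $G_{T,\beta}$, and write
\begin{equation*}
x(\beta)=\frac{1}{\mathrm{cross}(T,\beta)}\sum_{P}x(P),
\end{equation*}
where the sum ranges over perfect matchings $P$ of $G_{T,\beta}$ and $\mathrm{cross}(T,\beta)=\prod_j x_j^{(\alpha_j\mid\beta)}$ is the product over crossings of $\beta$ with $T$. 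By definition each factor $x_j$ appears in $\mathrm{cross}(T,\beta)$ exactly $(\alpha_j\mid\beta)$ times, so the candidate denominator vector is exactly $((\alpha_j\mid\beta))_j$. To conclude in the unpunctured case, I would verify the expansion is already in reduced form: one extracts the minimum-matching monomial and the maximum-matching monomial and shows that for each $j$ at least one matching gives a term in which no factor $x_j$ appears in the numerator, so there can be no cancellation with the denominator.

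The main obstacle is the passage to marked surfaces with punctures and tagged arcs, where the cluster-expansion formula takes several different shapes depending on whether $\beta$ is tagged plain or notched at each endpoint, whether $\beta$ is a loop enclosing a puncture, and whether the endpoints lie on punctures already shared with $\alpha_j$. The clean way to handle this is to use the tagged/band-graph extension of the snake-graph formula of Musiker--Schiffler--Williams, together with the combinatorial definition of $(\alpha_i\mid\beta)$ in \cite{FST}, which by construction adds correction terms matching exactly the modifications required to build $G_{T,\beta}$ from the naive crossing sequence. At each of these cases I would write out $\mathrm{cross}(T,\beta)$ according to the FST definition and verify that the denominator of the expansion formula agrees with it term-by-term; the essentially new content is a case analysis around punctures where conjugate tagged arcs at the same puncture contribute $0$ to the intersection pairing, matching the fact that the corresponding variables do not appear in $\mathrm{cross}(T,\beta)$.

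Finally I would finish by ruling out cancellation in the general case. Here I would argue that for every $j$, the expansion $\sum_P x(P)$ contains a matching $P$ for which the variable $x_j$ does not divide $x(P)$: this matching can be produced by flipping a judiciously chosen strip of the snake graph near the crossings with $\alpha_j$, and the argument is parallel to the one in the unpunctured case once the construction of $G_{T,\beta}$ and its minimal/maximal matchings is in hand. Once no cancellation occurs, the denominator of $x(\beta)$ in lowest terms is $\mathrm{cross}(T,\beta)=\prod_j x_j^{(\alpha_j\mid\beta)}$, and its $i\th$ component is precisely $(\alpha\mid\beta)$, which is what Theorem~\ref{FST denom} asserts.
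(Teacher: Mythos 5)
First, a point of orientation: the paper does not prove this statement at all. It is quoted verbatim as \cite[Theorem~8.6]{FST}, so there is no internal proof to compare your argument against; it has to be judged on its own terms. On those terms it contains a genuine gap.

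The gap is in the very first identification $\mathrm{cross}(T,\beta)=\prod_j x_j^{(\alpha_j|\beta)}$ and in the accompanying ``no cancellation'' step, and it already appears in the unpunctured case that you treat as the easy base case. The crossing monomial in the snake-graph formula of \cite{MSW} records only the geometric crossing numbers of $\beta$ with the arcs of $T$, i.e., the quantity $A$ of \cite[Definition~8.4]{FST}; the intersection number is $(\alpha_j|\beta)=A+B+C+D$, and $B$ can be strictly negative whenever $\alpha_j$ is a loop. Loops do occur in ideal triangulations of unpunctured surfaces: in the annulus with two marked points on one boundary component and one on the other, the two loops based at the two points on the same component cross each other twice and have $B=-1$, so $(\alpha_j|\beta)=1$ while the crossing number is $2$ (this is exactly the computation carried out in Section~\ref{special sec} of the paper for the annulus cases of Theorem~\ref{D surface}). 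In that situation every matching monomial in $\sum_P x(P)$ is divisible by $x_j$, the expansion is not in lowest terms, and your claim that for each $j$ some matching avoids $x_j$ is false. The terms $C$ and $D$ cause the same kind of divergence between the crossing monomial and the intersection number for tagged arcs sharing an underlying curve but differing in tags, which your sketch waves at but does not resolve. To repair the argument you would have to show that the greatest common divisor of the matching monomials accounts exactly for the corrections $B$, $C$, and $D$ --- which is essentially the whole content of the theorem, not a routine verification. For comparison, the original proof in \cite{FST} avoids expansion formulas entirely: it shows that the vectors of intersection numbers satisfy the final-seed-mutation recursion \eqref{usual D} under flips of the triangulation and then inducts on the number of flips from the initial triangulation.
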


In an exchange pattern arising from a marked surface, every exchange matrix $B_t$ is skew-symmetric, so $(-B_t)^T=B_t$.
Thus we have the following corollary to Theorem~\ref{FST denom}.

\begin{cor}
  \label{D sym}
  In an exchange pattern arising from a marked surface, Property D holds if and only if the intersection number is symmetric (i.e.\
  $(\alpha|\beta)=(\beta|\alpha)$ on all tagged arcs $\alpha$ and $\beta$ that correspond to cluster variables).
\end{cor}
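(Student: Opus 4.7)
The plan is to unpack Property D entry-by-entry using Theorem~\ref{FST denom}, exploiting skew-symmetry of exchange matrices in the surfaces case to make both sides of equation~\eqref{DD} into matrices of intersection numbers. Concretely, let $\alpha_1,\ldots,\alpha_n$ be the tagged arcs corresponding to the initial cluster variables $x_1,\ldots,x_n$ at $t_0$, and let $\beta_1,\ldots,\beta_n$ be the tagged arcs corresponding to $x_{1;t}^{B_0;t_0},\ldots,x_{n;t}^{B_0;t_0}$. By Theorem~\ref{FST denom}, the $ji$-entry of $D_t^{B_0;t_0}$ equals $(\alpha_j|\beta_i)$, so the $ij$-entry of $(D_t^{B_0;t_0})^T$ is $(\alpha_j|\beta_i)$.

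Next, I would analyze the right-hand side $D_{t_0}^{(-B_t)^T;t}$. Since every exchange matrix in an exchange pattern from a marked surface is skew-symmetric, $(-B_t)^T=B_t$, so the ``reversed'' cluster pattern is obtained simply by relabeling $\T_n$ so that the initial vertex is $t$ (with exchange matrix $B_t$) and keeping the initial labeled cluster equal to $(x_{1;t}^{B_0;t_0},\ldots,x_{n;t}^{B_0;t_0})$. Because mutation is involutive, this reversed pattern assigns to the vertex $t_0$ exactly the labeled cluster $(x_1,\ldots,x_n)$; in particular its $j$th cluster variable at $t_0$ is the original $x_j$, corresponding to the tagged arc $\alpha_j$. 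Applying Theorem~\ref{FST denom} again, this time with $\beta_i$ playing the role of the initial arc and $\alpha_j$ playing the role of the arc whose denominator we are computing, shows that the $ij$-entry of $D_{t_0}^{(-B_t)^T;t}$ equals $(\beta_i|\alpha_j)$.

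Comparing entries, equation~\eqref{DD} at the chosen pair $(t_0,t)$ is equivalent to the assertion that $(\alpha_j|\beta_i)=(\beta_i|\alpha_j)$ for all $i,j$. Quantifying over all $t_0,t\in\T_n$, Property D is then equivalent to $(\alpha|\beta)=(\beta|\alpha)$ for all pairs of tagged arcs $(\alpha,\beta)$ that arise together as a pair of cluster variables from two (possibly distinct) seeds. Since the bijection between tagged arcs (resp.\ plain-tagged arcs in the boundary-free case) and cluster variables is global, any pair of cluster-variable-corresponding arcs occurs in this way for some choice of $(t_0,t)$, giving the corollary.

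The only delicate step is the identification of the reversed cluster pattern's cluster at $t_0$ with the original initial cluster; this is where involutivity of mutation and the fact that $(-B_t)^T=B_t$ in the skew-symmetric case do all the work. Everything else is a direct substitution into Theorem~\ref{FST denom}, so I do not anticipate a serious obstacle.
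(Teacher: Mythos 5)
Your argument is correct and is exactly the (implicit) argument of the paper, which derives the corollary directly from Theorem~\ref{FST denom} together with the observation that $(-B_t)^T=B_t$ for the skew-symmetric exchange matrices of a surface, so that both sides of \eqref{DD} become matrices of intersection numbers and Property D reduces to symmetry of $(\,\cdot\,|\,\cdot\,)$. The paper states the corollary without writing out these details; your write-up, including the identification of the reversed pattern's cluster at $t_0$ via involutivity of mutation and the remark that every pair of cluster-variable arcs occurs for some choice of $(t_0,t)$, fills them in correctly.
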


The intersection number $(\alpha|\beta)$ is defined in \cite[Definition~8.4]{FST} to be the sum of four quantities $A$, $B$, $C$, and $D$.
To define these, we choose $\alpha_0$ and $\beta_0$ to be non-self-intersecting curves homotopic (relative to the set of marked points) to $\alpha$ and $\beta$, and intersecting with each other the minimum possible number of times, transversally each time.
The quantity $A$ is the number of intersection points of $\alpha_0$ and $\beta_0$ (excluding intersections at their endpoints).
The quantity $B$ is zero unless $\alpha_0$ is a loop (i.e.\ unless the two endpoints of $\alpha_0$ coincide).
If $\alpha_0$ is a loop, let $a$ be its endpoint.
We number the intersections as $b_1,\ldots,b_k$ in the order they are encountered when following $\beta_0$ in some direction.
For each $i=1,\ldots,k-1$, there is a unique segment $[a,b_i]$ of $\alpha_0$ having endpoints $a$ and $b_i$ and not containing $b_{i+1}$.
There is also a unique segment $[a,b_{i+1}]$ of $\alpha_0$ having endpoints $a$ and $b_{i+1}$ and not containing $b_i$.
Let $[b_i,b_{i+1}]$ be the segment of $\beta_0$ connecting $b_i$ to $b_{i+1}$.
The quantity $B$ is $\sum_{i=1}^{k-1}B_i$, where $B_i$ is $-1$ if the segments $[a,b_i]$, $[a,b_{i+1}]$, and $[b_i,b_{i+1}]$ define a triangle that is contractible and $B_i=0$ otherwise.
The quantity $C$ is zero unless $\alpha_0$ and $\beta_0$ are equal up to isotopy relative to the set of marked points, in which case $C=-1$.
The quantity $D$ is the number of ends of $\beta$ that are incident to an endpoint of $\alpha$ and carry, at that endpoint, a different tag from the tag of $\alpha$ at that endpoint.

The quantities $A$ and $C$ are patently symmetric in $\alpha$ and $\beta$, so we need not consider them in this section.
It is pointed out in \cite[Example~8.5]{FST} that $D$ can fail to be symmetric.
The quantity $B$ can also fail to be symmetric.
Examples will occur below.

Some immediate observations will be helpful.
\begin{obs}
  \label{no loops}
  In a surface having no tagged arcs that are loops, $B$ is always $0$ and $D$ is also always symmetric.
\end{obs}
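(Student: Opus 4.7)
The plan is to observe that both assertions follow almost immediately from the definitions of $B$ and $D$ given just above the observation, once the loop-freeness hypothesis is invoked at the appropriate moment. I would split the argument into two short pieces, one per quantity.

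For $B$, I would argue as follows. The curve $\alpha_0$ is a non-self-intersecting curve homotopic to $\alpha$ relative to the set of marked points, and any such homotopy preserves the endpoints of $\alpha$. If no tagged arc is a loop, then $\alpha$ has two distinct endpoints, so $\alpha_0$ does as well; in particular $\alpha_0$ is not a loop. By the very definition of $B$, that quantity is zero whenever $\alpha_0$ is not a loop, so $B=0$ identically in this setting.

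For $D$, I would unpack the definition: $D(\alpha|\beta)$ counts the ends of $\beta$ incident to an endpoint of $\alpha$ that carry, at that endpoint, a tag different from the tag of $\alpha$ there. The possible asymmetry of $D$ in general stems from the fact that $\alpha$ may have two ends incident to a single marked point (precisely when $\alpha$ is a loop), so that a single end of $\beta$ at that point could be ``compared'' against two different tags of $\alpha$. In the loop-free case this cannot happen: each tagged arc has two distinct endpoints, each carrying a single tag. Hence $D(\alpha|\beta)$ becomes simply the number of pairs $(p_\alpha, p_\beta)$, with $p_\alpha$ an endpoint of $\alpha$ and $p_\beta$ an endpoint of $\beta$, for which $p_\alpha = p_\beta$ as marked points and the tags of $\alpha$ and $\beta$ at that point disagree. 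This condition on the pair is manifestly symmetric in $\alpha$ and $\beta$, so $D(\alpha|\beta)=D(\beta|\alpha)$. There is really no obstacle to overcome: the content of the observation is the diagnosis that the presence of loops is the only mechanism by which $B$ and $D$ can break symmetry of the intersection number.
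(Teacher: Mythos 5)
Your argument is correct and is exactly the unpacking the paper intends: the authors state this as an ``immediate observation'' with no written proof, and the only content is that $B$ vanishes by definition when $\alpha_0$ is not a loop, while loop-freeness makes each arc carry a single well-defined tag at each of its two distinct endpoints, so $D(\alpha|\beta)$ reduces to the manifestly symmetric count of common endpoints at which the tags of $\alpha$ and $\beta$ disagree. No gap.
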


\begin{obs}
  \label{no punctures}
  In a surface having no punctures, $D$ is always zero.
\end{obs}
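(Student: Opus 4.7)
The plan is to unwind the definition of $D$ recalled just above the observation and verify that its value is determined solely by tag discrepancies at shared endpoints, which cannot arise when there are no punctures. First I would recall the convention for tagged arcs on a marked surface: the two possible tags (plain and notched) are distinguished only at punctures, while every end of a tagged arc that lies on a boundary component is, by convention, plain. Consequently, if the surface has no punctures at all, then every endpoint of every tagged arc is a boundary marked point and therefore carries the plain tag.

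Next I would apply the definition of $D(\alpha|\beta)$ from \cite[Definition~8.4]{FST}: it counts the ends of $\beta$ incident to an endpoint of $\alpha$ that carry, at that endpoint, a \emph{different} tag from the tag of $\alpha$ there. In the no-puncture setting just described, all tags in sight are plain, so no such discrepancy can occur and each potential contribution to $D$ vanishes. This gives $D(\alpha|\beta)=0$ for any pair of tagged arcs $\alpha,\beta$.

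There is essentially no obstacle here beyond citing the convention that boundary-marked endpoints carry only the plain tag; once that is on the table, the observation is immediate from the definition. I would write the argument as a single short sentence pointing out that a ``different tag'' is impossible without a puncture, and hence every summand in $D$ is zero.
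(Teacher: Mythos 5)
Your argument is exactly the one the paper gives: since notched tagging may occur only at punctures, in a puncture-free surface every end of every tagged arc is tagged plain, so the tag-discrepancy count $D$ vanishes. Correct, and the same approach as the paper.
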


\begin{obs}
  \label{once-punctured}
  In a surface having exactly one puncture and no boundary components, $D$ is always zero on tagged arcs corresponding to cluster variables.
\end{obs}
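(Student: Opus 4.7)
The plan is to derive Observation~\ref{once-punctured} directly from the conventions recalled at the start of Subsection~\ref{D surface sec}. The essential ingredient is the parenthetical clause in that convention: in a surface with no boundary components, cluster variables correspond to \emph{plain}-tagged arcs only, not to arbitrary tagged arcs. Once this is in hand, the verification reduces to a one-line check against the definition of $D$.

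First I would restate the relevant convention in the context at hand: since the surface has no boundary components, any tagged arc $\gamma$ corresponding to a cluster variable $x(\gamma)$ is plain at both of its endpoints. Hence if $\alpha$ and $\beta$ both correspond to cluster variables, both are plain-tagged everywhere. Next I would invoke the definition of $D$ from \cite[Definition~8.4]{FST} recalled in the discussion after Corollary~\ref{D sym}: $D$ counts ends of $\beta$ that are incident to an endpoint of $\alpha$ and carry, at that endpoint, a tag \emph{different} from the tag of $\alpha$ there. Since every end of every relevant arc is plain, the tags necessarily agree at every shared endpoint, so no end contributes and $D=0$.

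There is essentially no obstacle; the only step that warrants a moment's care is confirming that "no boundary components implies plain-tagged arcs only" is indeed the convention operative in this paper, which the authors have already stated explicitly. The hypothesis of exactly one puncture is not logically needed for this particular argument---it would apply verbatim to any closed surface under the stated convention---but singling out the once-punctured closed case is natural because the case analysis in the proof of Theorem~\ref{D surface} will treat the torus with one puncture separately from closed surfaces with more punctures (such as the sphere with four), where the quantity $B$ requires additional scrutiny even though $D$ behaves trivially for the same reason.
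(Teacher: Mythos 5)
Your core argument is exactly the paper's: in this case cluster variables correspond to plain-tagged arcs only, so the tags of $\alpha$ and $\beta$ agree at every shared endpoint and the quantity $D$ of \cite[Definition~8.4]{FST} vanishes. That part is fine. However, your closing remark --- that the hypothesis of exactly one puncture ``is not logically needed'' and that the argument applies to any closed surface --- is wrong, and the error matters. The sentence you are leaning on (``except in the case of surfaces with no boundary components, where plain-tagged arcs are in bijection with cluster variables'') is loosely worded; the actual statement in \cite{FST} is that the exception occurs precisely for \emph{closed surfaces with exactly one puncture}, where the tagged arc complex splits into an all-plain and an all-notched component and only one of them is realized. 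For a closed surface with two or more punctures (e.g.\ the sphere with four or five punctures), notched arcs genuinely do correspond to cluster variables. Indeed, if your broader reading were correct, $D$ would vanish identically on the five-punctured sphere, contradicting case~(8) in the proof of Theorem~\ref{D surface}, where the failure of Property~D is exhibited precisely by an asymmetric $D$ there; it would also make Observation~\ref{no loops} (which only asserts \emph{symmetry} of $D$ for the four-punctured sphere) pointlessly weak. So the one-puncture hypothesis is essential, and your proof should cite the correct fact --- that in a once-punctured closed surface the reachable tagged arcs are exactly the plain ones --- rather than the over-broad convention.
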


For the second observation, recall that notched tagging may occur only at punctures.
For the third observation, recall that in a surface with exactly one puncture and no boundary components, tagged arcs correspond to cluster variables if and only if they are tagged plain.

To prove one direction of Theorem~\ref{D surface}, we show that $B+D$ is symmetric in the cases listed in the theorem.
First, recall that a tagged arc may not bound a once-punctured monogon and may not be homotopic to a segment of the boundary between two adjacent marked points.
In particular, there are no loops in a disc with at most one puncture, in the unpunctured annulus with 2 marked points, in the twice-punctured disk with one marked point on its boundary, or in the four-times-punctured sphere.
Thus Observation~\ref{no loops} shows that $B+D$ is symmetric in the cases described in~\eqref{disk good} and~\eqref{sphere 4 good}, and in the simplest cases described in~\eqref{small Atilde good} and~\eqref{small Dtilde good}.

In the remaining cases described in~\eqref{small Atilde good}, Observation~\ref{no punctures} shows that $D$ is always zero.
We are interested in pairs of arcs containing at least one loop (otherwise $B$ is zero in both directions).
Because an arc may not be homotopic to a boundary segment, there are no loops at a marked point if it is the only marked point on its boundary component.
A marked point that is not the only marked point on its boundary component supports exactly one loop.
If there are two marked points on one component and one marked point on the other, then we are in the situation of Figure~\ref{two and one}.
In this case, numbering the points as in the figure, the only two loops in the surface are based one at $1$ and one at $2$.
The remaining arcs start at $3$, spiral around some number of times and then reach either $1$ or $2$.
The only arc that intersects one of the loops more than once is the other loop.
These have $B=-1$ in both directions, so $B$ is symmetric in this case.
If there are two marked points on each boundary component, the argument is similar and only slightly more complicated.
There are four loops, as illustrated in Figure~\ref{two and two}.
Each of the remaining arcs connects a point of one boundary to a point of the other boundary, with some number of spirals.
Again, for any of the four loops there is only one other arc intersecting it more than once; it is the loop based at the other marked point on the same boundary component.
For each pair of intersecting loops we calculate $B=-1$ in both directions.
We have finished case \eqref{small Atilde good}.
\begin{figure}
  \begin{minipage}{110 pt}
    \centering
    \scalebox{0.9}{\includegraphics{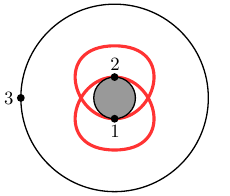}}
    \caption{}
    \label{two and one}
  \end{minipage}
  \begin{minipage}{120 pt}
    \centering
    \scalebox{0.9}{\includegraphics{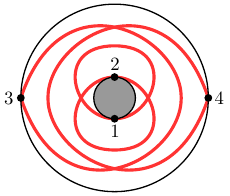}}
    \caption{}
    \label{two and two}
  \end{minipage}
  \begin{minipage}{110 pt}
    \centering
    \scalebox{0.9}{\includegraphics{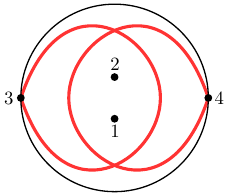}}
    \caption{}
    \label{two and punctures}
  \end{minipage}
  \label{annuli}
\end{figure}

The remaining case (a disk with two punctures and two boundary points) in \eqref{small Dtilde good} is similar to the cases in \eqref{small Atilde good}.
There is a loop at each marked point on the boundary but no other loop, as illustrated in Figure~\ref{two and punctures}.
In particular, the tagging at these loops is plain, and we see that $D$ is symmetric.
There is exactly one arc connecting the two boundary points and four tagged arcs (all with the same underlying arc) connecting the two punctures.
The remaining arcs have a boundary point at one endpoint and spiral around the punctures some number of times before ending at one of the punctures, with either tagging there.
Once again, the only arc that intersects more than once one of the loops is the other loop, and we again have $B=-1$ in both directions.

The remaining two cases are described in \eqref{torus 1 good}.
We first consider the once-punctured torus.
In this cases, $D=0$ by Observation~\ref{once-punctured}, so it remains to show that $B$ is symmetric.
We will show that in fact $B$ is zero on all pairs of arcs.
Arcs in the once-punctured torus are well-known to be in bijection with rational slopes, including the infinite slope.
(See, for example, \cite[Section~4]{unitorus}.)
Each such slope can be written uniquely as a reduced fraction $\frac ba$ such that $a\ge0$ and that $b=1$ whenever $a=0$.
If we take the universal cover (the plane $\reals^2$) of the torus mapping each integer point to the puncture, the arc indexed by a slope $\frac ba$ lifts to a straight line segment connecting the origin to the point $(a,b)$.
(The same arc also lifts to all integer translates of that line segment.)

It is now easy to see that $B=0$ for arcs in the once-punctured torus.
For any two arcs $\alpha$ and $\beta$, let $\alpha_0$ and $\beta_0$ be the curves on the torus obtained by projecting the associated straight line segments in the plane.
This choice of representatives minimizes the number of intersections as can be seen by looking at the universal cover.
Let $a$ and $b_1,\ldots,b_k$ be the points as in the definition of $B$.
Given some $i$ between $1$ and $k-1$, concatenate the curves $[a,b_i]$, $[b_i,b_{i+1}]$, and $[b_{i+1},a]$, and consider the lift of the concatenated curve to the plane.
This lifted curve consists of two parallel line segments and one line segment not parallel to the other two.
In particular, it is impossible for the lifted curve to start and end at the same point.
The situation is illustrated in Figure~\ref{one pun}, where a lift of $\beta_0$ is shown as a solid line, several lifts of $\alpha_0$ are shown as dotted lines and a lift of the three concatenated curves is highlighted.
By the standard argument on fundamental groups and universal covers, we see that the concatenation of $[a,b_i]$, $[b_i,b_{i+1}]$, and $[b_{i+1},a]$ is not a contractible triangle, and we conclude that $B=0$ on $\alpha$ and $\beta$.
\begin{figure}
  \begin{minipage}{164pt}
    \centering
    \scalebox{1.4}{\includegraphics{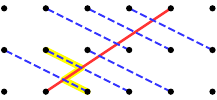}}
    \caption{}
    \label{one pun}
  \end{minipage}
  \begin{minipage}{167pt}
    \scalebox{0.4}{\includegraphics{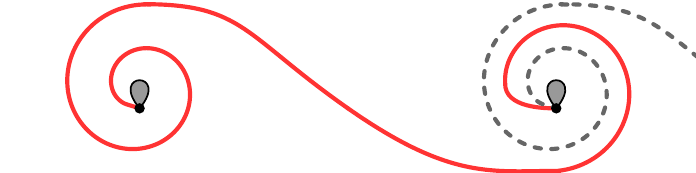}}\\[10pt]
    \scalebox{0.4}{\includegraphics{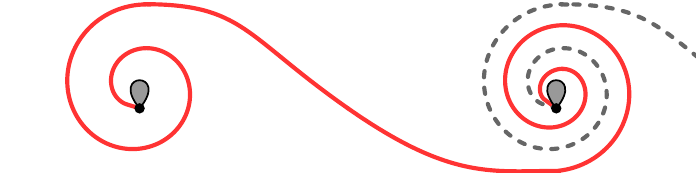}}
    \caption{}
    \label{two possible}
  \end{minipage}
\end{figure}

The final case for this direction of the proof is the torus with one boundary component and one marked point.
In this case, $D$ is again zero, this time by Observation~\ref{no punctures}, so we will show that $B$ is symmetric.
We think of the boundary component as a ``fat point'' on the torus.
With this trick, we can again consider lifts of arcs to the plane.
Each arc lifts to a curve connecting the origin to an integer point $(a,b)$ with $a$ and $b$ satisfying the same conditions as above for the once-punctured torus.
However, for each such $(a,b)$, there is a countable collection of arcs connecting the origin to $(a,b)$.
Specifically, for each integer $k$, the arc may wind $k$ times clockwise about the fat origin point before going to $(a,b)$.
(Negative values of $k$ specify counterclockwise spirals.)
Since $(a,b)$ and the origin both project to the same fat point on the torus, the number and direction of spirals at $(a,b)$ is determined almost uniquely by $k$.
There are two possibilities for each $k$, illustrated in Figure~\ref{two  possible} for the case where $(a,b)=(1,0)$.

For each arc $\alpha$, choosing the right change of basis of the integer lattice, we may as well assume that the lift of $\alpha$ connects the origin to the point $(1,0)$.
Furthermore, there is a homeomorphism from the torus to itself that rotates the fat point and changes the number of spirals of $\alpha$ at the origin and at $(1,0)$.
Rotating a half-integer number of full turns, we can assume $\alpha$ lifts to a straight horizontal line segment from $(0,0)$ to $(1,0)$.
Possibly reflecting the plane through the horizontal line containing the origin (to offset the effect of a half-turn), we can assume that $\alpha$ looks like the solid arc shown in Figure~\ref{spirals}, with the boundary component above the origin in the picture.
\begin{figure}
  \scalebox{0.7}{\includegraphics{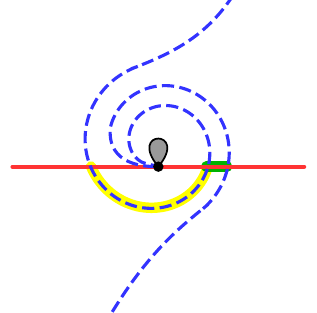}}\qquad\scalebox{0.7}{\includegraphics{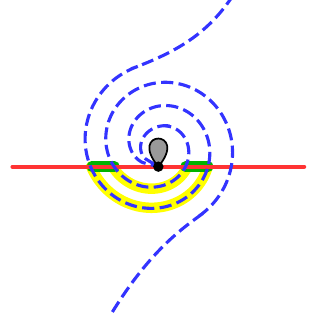}}
  \caption{}
  \label{spirals}
\end{figure}

Now take another arc $\beta$ and consider a lift of $\beta$ connecting the origin to $(a,b)$.
Since another lift connects $(-a,-b)$ to the origin, we may as well take $b\ge0$.
Up to a reflection in a vertical line, we can assume that the lift of $\beta$ spirals clockwise (if it spirals at all) as it leaves the origin.
Fixing one possible number of spirals of $\beta$ at $(0,0)$ and fixing some $(a,b)$ with $b>0$, the two possibilities for the lift of $\beta$ are shown as dashed arcs in Figure~\ref{spirals}.
Nonzero contributions to $B$ can arise only from segments that remain close to the fat point: by the same argument as for the once-punctured torus, the segments that do not stay near the fat point contribute nothing.
Therefore it is enough to analyze the intersections of $\alpha$ and $\beta$ near the origin.
In each of the two possibilities we highlight in Figure~\ref{spirals} the segments  $[a_i,a_{i+1}]$ of $\alpha$ and  $[b_j,b_{j+1}]$ of $\beta$ giving nonzero contributions.
In the pictured examples, $B$ is symmetric in $\alpha$ and $\beta$.
It is easy to see that the symmetry survives when the number of spirals changes.
The case $b=0$ looks slightly different, but $B$ is still symmetric for essentially the same reasons.
(Look back, for example at Figure~\ref{two possible}.)

We have proved one direction of Theorem~\ref{D surface}.
To prove the other direction, we need to show that $B+D$ fails to be symmetric in certain cases.
In each case, the failure of symmetry can be illustrated in a figure.
Here, we list the cases and indicate, for each case, the corresponding figure.
In some cases, we also include some comments in italics.
In each case, $\alpha$ is the solid arc and $\beta$ is the dashed arc; they intersect in at most two points. We omit the labeling $a, a_1, a_2, b, b_1, b_2$ not to clutter the pictures.
This will complete the proof of Theorem~\ref{D surface}.
\begin{enumerate}
  \item A surface with genus greater than 1 (Figure~\ref{genus}).
    \begin{figure}
      \begin{minipage}{106pt}
        \includegraphics{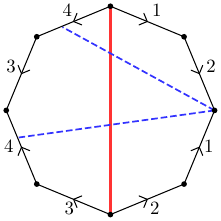}
        \caption{}
        \label{genus}
      \end{minipage}
      \qquad
      \qquad
      \begin{minipage}{191pt}
        \includegraphics{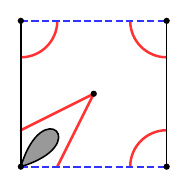}\quad\includegraphics{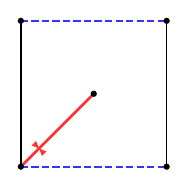}
        \caption{}
        \label{torus2}
      \end{minipage}
    \end{figure}
    \textit{
      We show the genus-2 case.
      Pairs of edges in the octahedron are identified as indicated by the numbering and the arrows.
      Since all taggings are plain, $D=0$.
      However, $B$ is asymmetric ($-1$ in one direction and $0$ in the other).
      The marked point shown in the figure is a puncture, but the same example works with the marked point on a boundary component.
      For higher genus or to have additional punctures, one can start with the surface shown and perform a connected sum, cutting a disk from the interior of the octagon shown.
    }
  \item A torus with 2 or more marked points (Figure~\ref{torus2}).
    \textit{
      Opposite pairs of edges in the square are identified.
      If the marked point at the corners of the square is on a boundary component, then the arcs shown in the left picture of the figure have $D=0$ but $B$ is asymmetric (taking values $0$ and $-1$).
      Additional punctures and/or boundary components may exist, but the arcs $\alpha$ and $\beta$ can always be chosen so that the triangle $[b,a_1]$,$[a_1,a_2]$, $[a_2,b]$ is contractible while the triangle $[a,b_1]$, $[b_1,b_2]$, $[b_2,a]$ is not.
      If the marked point at the corners is a puncture, then the right picture applies.
      In this case, $B=0$ but $D$ is asymmetric because one of the arcs is a loop and the other is not.
    }
  \item A sphere with 3 or more boundary components and possibly some punctures
    (Figure~\ref{sphere3}).
    \begin{figure}
      \begin{minipage}{100pt}
        \scalebox{0.9}{\includegraphics{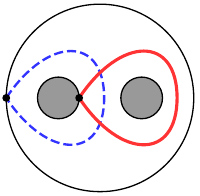}}
        \caption{}
        \label{sphere3}
      \end{minipage}
      \qquad
      \begin{minipage}{100pt}
        \scalebox{0.9}{\includegraphics{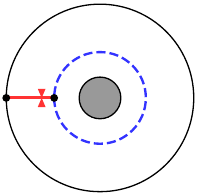}}
        \caption{}
        \label{annulus1}
      \end{minipage}
      \qquad
      \begin{minipage}{100pt}
        \scalebox{0.9}{\includegraphics{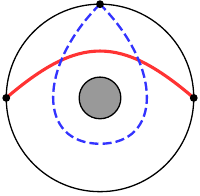}}
        \caption{}
        \label{annulus0}
      \end{minipage}
    \end{figure}
    \textit{
      We show a disk with 2 additional boundary components.
      For the arcs shown, $D=0$ but $B$ is asymmetric.
      Again, additional punctures and/or boundary components may exist, but the triangle $[a,b_1]$, $[b_1,b_2]$, $[b_2,a]$ is contractible.
    }
  \item An annulus with one or more punctures (Figure~\ref{annulus1}).
    \textit{
      $B=0$ and $D$ is asymmetric on the arcs shown.
    }
  \item An unpunctured annulus with 3 or more marked points on one of its boundary components (Figure~\ref{annulus0}).
    \textit{
      $B$ is asymmetric and $D=0$.
    }
  \item A disk with 3 or more punctures (Figure~\ref{disk3}).
    \begin{figure}
      \begin{minipage}{100pt}
        \scalebox{0.9}{\includegraphics{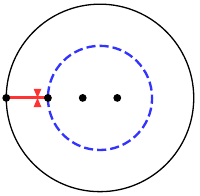}}
        \caption{}
        \label{disk3}
      \end{minipage}
      \qquad
      \begin{minipage}{100pt}
        \scalebox{0.9}{\includegraphics{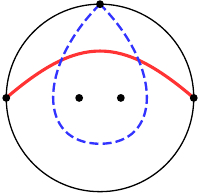}}
        \caption{}
        \label{disk2}
      \end{minipage}
    \end{figure}
    \textit{
      $B=0$ and $D$ is asymmetric.
    }
  \item A disk with 2 punctures and 3 or more marked points on the boundary (Figure~\ref{disk2}).
    \textit{
      $B$ is asymmetric and $D=0$.
    }
  \item A sphere with 5 or more punctures (Figure~\ref{sphere5}).
    \begin{figure}
      \includegraphics{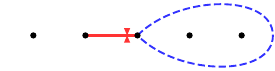}
      \caption{}
      \label{sphere5}
    \end{figure}
    \textit{
      We show a local patch of the sphere containing all of the punctures.
      $B=0$ and $D$ is asymmetric.
    }
\end{enumerate}

\section{Source-sink moves on triangulated surfaces}
\label{source-sink surf sec}
In this section, we prove Theorem~\ref{source-sink surf}, the assertion that Conjectures~\ref{source-sink conj} and~\ref{source-sink sigma} hold for marked surfaces.
Conjecture~\ref{signed columns} holds for surfaces because the stronger conjecture \cite[Conjecture~7.4]{ca4} for surfaces is an easy consequence of \cite[Theorem~8.6]{FST}.
Thus by Proposition~\ref{equiv conj}, we need only to prove the assertion about Conjecture~\ref{source-sink conj}.
In light of Theorem~\ref{FST denom}, the task is to prove a certain identity on intersection numbers.
This identity is already known (as a special case of Property R) for the surfaces listed in Theorem~\ref{D surface}, and it will be convenient in what follows that we need not consider those surfaces.

Suppose $\alpha$ is a tagged arc in a tagged triangulation $T$ and suppose $\alpha'$ is the arc obtained by flipping $\alpha$ in $T$.
We may as well take $T$ to be obtained from an ideal triangulation $T^\circ$ by applying the map $\tau$ of \cite[Definition~7.2]{FST} to each arc.
(Any other tagged triangulation could be obtained from such a triangulation by changing tags, which by definition \cite[Definition~9.6]{FST} does not affect the associated $B$-matrix.)
In particular, $B(T)=B(T^\circ)$.
We will abuse notation and denote by the same Greek letters both ideal arcs and their corresponding tagged arcs.
Suppose all of the entries in the row of $B(T)$ indexed by $\alpha$ weakly agree in sign.
Because of the symmetry between $D_t^{B_0;t_0}$ and $D_t^{B_1;t_1}$ in \eqref{D backwards source-sink}, we may as well assume that all entries in the row of $B(T)$ indexed by $\alpha$ are nonnegative; in this case we will say that ``$\alpha$ is a source'' alluding to the usual encoding of skew-symmetric exchange matrices by quivers.

Let $\beta$ be any other arc.
Keeping in mind that equation~\eqref{D backwards source-sink}, like equation~\eqref{D backwards} before it, is true outside of row $k$ by Proposition~\ref{MD init}, the task is to prove the following identity:
\begin{equation}
  \label{the task}
  (\alpha'|\beta)=-(\alpha|\beta)+\sum_{\gamma\in T}b_{\alpha\gamma}(\gamma|\beta)
\end{equation}
where $b_{\alpha\gamma}$ is the entry of $B(T)$ in the row indexed by $\alpha$ and column indexed by $\gamma$.

The key observation in our proof is that the entries $b_{\alpha\gamma}$ depends only on how $T$ looks locally near $\alpha$.
Therefore we begin our analysis by constructing a short list of possible local configurations.
To do this we build the surface and the ideal triangulation $T$ simultaneously by adjoining \newword{puzzle pieces} as in \cite[Section~4]{FST}.
In fact, in \cite[Section~4]{FST}, the \emph{ideal} triangulation $T^\circ$ is built from puzzle pieces, but to save a step, we apply the map $\tau$ to the puzzle pieces \emph{before} assembling, rather than \emph{after}.
The resulting \newword{tagged puzzle pieces} are shown in Figure~\ref{tagged pieces}.
We will refer to them (from left to right in the Figure) as triangle pieces, digon pieces, and monogon pieces.
The external edges of digon pieces are distinguishable (up to reversing the orientation of the surface) and we will call them the left edge and the right edge according to how they are pictured in Figure~\ref{tagged pieces}.
Similarly, the two pairs of internal arcs in a monogon piece are distinguishable, and we will call them the left pair and right pair according to Figure~\ref{tagged pieces}.
\begin{figure}
  \scalebox{0.9}{\includegraphics{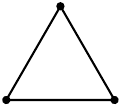}}
  \qquad
  \qquad
  \scalebox{0.9}{\includegraphics{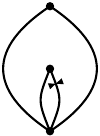}}
  \qquad
  \qquad
  \scalebox{0.9}{\includegraphics{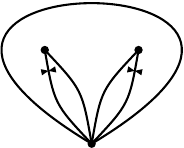}}
  \caption{Tagged puzzle pieces}
  \label{tagged pieces}
\end{figure}
Puzzle pieces are joined by gluing along their outer edges.
Unjoined outer edges become part of the boundary of the surface.
In \cite[Section~4]{FST}, one specific triangulation is mentioned that cannot be obtained from these puzzle pieces, but it is a triangulation of the 4-times punctured sphere, so by Theorem~\ref{D surface}, we need not consider it.

The list of possible local configurations around $\alpha$, given $\alpha$ is a source, appears in Figure~\ref{configurations}. (We leave  out the cases where Theorem~\ref{D surface} applies.)
In the figure, areas just outside the boundary are marked in gray.
The curve $\alpha$ is labeled, or if two curves might be a source, both of them are labeled $\alpha$.

\begin{figure}
  \scalebox{0.9}{\includegraphics{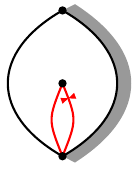}%
  \begin{picture}
    (0,0)(35,-43)
    \put(7,-18){\small\textcolor{red}{$\alpha$}}\put(-13,-18){\small\textcolor{red}{$\alpha$}}
  \end{picture}}
  \scalebox{0.9}{\includegraphics{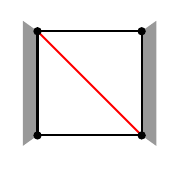}%
  \begin{picture}
    (0,0)(43,-43)
    \put(1,1){\small\textcolor{red}{$\alpha$}}
  \end{picture}}
  \,\,
  \scalebox{0.9}{\includegraphics{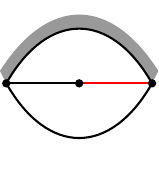}%
  \begin{picture}
    (0,0)(38,-43)
    \put(12,2.5){\small\textcolor{red}{$\alpha$}}
  \end{picture}}
  \,\,\,
  \scalebox{0.9}{\includegraphics{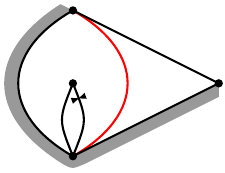}%
  \begin{picture}
    (0,0)(68,-43)
    \put(23,-1){\small\textcolor{red}{$\alpha$}}
  \end{picture}}
  \caption{Possible local configurations surrounding a source}
  \label{configurations}
\end{figure}

To obtain this list, recall that the entries in the row indexed by $\alpha$ are determined by the triangles of $T^\circ$ containing $\alpha$ or, if $\alpha$ is the folded side of a self-folded triangle, by the triangles containing the other side of that self-folded triangle.
(See \cite[Definition~4.1]{FST}.)
In particular, if $\alpha$ is an internal arc in a digon or monogon piece, the entries in the row indexed by $\alpha$ are determined completely within the piece.
Both internal arcs in the digon piece are sources if and only if the right external edge of the digon is on the boundary, as shown in the first (i.e.\
leftmost) picture of Figure~\ref{configurations}.
We need not consider the case where both external edges of the digon piece are on the boundary, because Theorem~\ref{D surface} applies to a once-punctured digon.

In the monogon piece, both the arcs in the left pair are never sources, and the arcs of the right pair are sources if and only if the external edge of the monogon is on the boundary.
However, we don't need to consider that case because the surface is a twice-punctured monogon, and Theorem~\ref{D surface} applies.

If $\alpha$ is the external edge of a monogon piece, then each of the two left internal arcs $\gamma$ has $b_{\alpha\gamma}=-1$, so $\alpha$ is not a source.
It remains, then, to consider how external edges of triangle and digon pieces can be sources.
We need to consider two cases.

Suppose $\alpha$ is an edge in a triangle piece and suppose $\gamma$ is the edge reached from $\alpha$ by traversing the boundary of the triangle in a counter-clockwise direction.
If $\gamma$ is not on the boundary, then the triangle contributes $-1$ to $b_{\alpha\gamma}$, so $\alpha$ cannot be a source unless either $\gamma$ is on the boundary or $\alpha$ and $\gamma$ are also in a second triangle that contributes $1$ to $b_{\alpha\gamma}$.

Next suppose $\alpha$ is an external edge in a digon piece.
If $\alpha$ is the left edge, then each of the two internal arcs $\gamma$ has $b_{\alpha\gamma}=-1$, so $\alpha$ is not a source.
If $\alpha$ is the right edge, let $\gamma$ be the left edge.
As in the triangle case, $\alpha$ cannot be a source unless either $\gamma$ is on the boundary or $\alpha$ and $\gamma$ are also in a second triangle that contributes $1$ to $b_{\alpha\gamma}$.

Putting all of these observations together, we see that we must consider three more possibilities obtained by gluing a triangle or digon piece to another triangle or digon piece.
We can glue two triangle pieces together along one edge with opposite edges of the resulting quadrilateral on the boundary as shown in the second picture in Figure~\ref{configurations}.
Conceivably the top and bottom arcs shown in the picture are identified, but we need not consider this case because then the surface is an annulus with two marked points on each boundary component, and Theorem~\ref{D surface} applies.
We can glue two triangle pieces together along two edges, with one of the remaining edges on the boundary as shown in the third picture in Figure~\ref{configurations}.
We can glue a triangle piece along one of its edges to the right edge of a digon piece, with both the left digon edge and the triangle edge counter-clockwise from the glued edge on the boundary, as shown in the fourth and last picture in Figure~\ref{configurations}.
We can glue a triangle piece along two of its edges to the two edges of a digon piece, with the remaining edge of the triangle on the boundary, but we need not consider this case, because the surface is a twice-punctured monogon, and Theorem~\ref{D surface} applies.
We can glue two digon pieces, right edge to right edge, with the remaining two edges on the boundary.
However, we need not consider this case either, because the surface is a twice-punctured digon and Theorem~\ref{D surface} applies.
Finally, we can glue both edges of a digon piece to both edges of another digon piece, but in this case, we obtain a 3-times punctured sphere, which is explicitly disallowed in the definition of marked surfaces \cite[Definition~2.1]{FST}.
Thus the four configurations in Figure~\ref{configurations} are the only local configurations near arcs that are sources, except in surfaces to which Theorem~\ref{D surface} applies.
We will see that the first and third configurations shown are essentially equivalent for our purposes.

Recall from Section~\ref{D surface sec} that $(\alpha|\beta)$ is the sum of four quantities $A$, $B$, $C$, and $D$.
As before, $\alpha_0$ and $\beta_0$ are non-self-intersecting curves homotopic (relative to the set of marked points) to $\alpha$ and $\beta$ respectively, intersecting with each other the minimum possible number of times, transversally each time.
Recall that $B=0$ unless $\alpha_0$ is a loop.
In the configurations of Figure~\ref{configurations}, $\alpha_0$ is never a loop.
Furthermore, the quantity $b_{\alpha\gamma}$ is nonzero only if $\gamma$ is in a triangle with $\alpha$, and none of the arcs making triangles with $\alpha$ is a loop in the configurations of Figure~\ref{configurations}.
Therefore, we can ignore $B$ in all the calculations of intersection numbers in this section.
Recall also that $A$ is the number of intersection points of $\alpha_0$ and $\beta_0$ (excluding intersections at their endpoints), that $C=0$ unless $\alpha_0$ and $\beta_0$ coincide, in which case $C=-1$, and that $D$ is the number of ends of $\beta$ that are incident to an endpoint of $\alpha$ and carry, at that endpoint, a different tag from the tag of $\alpha$ at that endpoint.

We observe that $(\alpha|\beta)$ is invariant under changing all taggings of $\alpha$ and of $\beta$ at some puncture.
Thus for the first (leftmost) picture in Figure~\ref{configurations}, we may as well take $\alpha$ to be the arc tagged notched at the puncture.
Figure~\ref{labeled configurations} shows the configurations of Figure~\ref{configurations} with some additional information.
First, the arc $\alpha'$, obtained by flipping $\alpha$, is shown and labeled.
Also, the arcs $\gamma$ such that $b_{\alpha\gamma}>0$ are labeled.
There is either one arc $\gamma_1$, two arcs $\gamma_1$ and $\gamma_2$, or three arcs $\gamma_1$, $\gamma_2$ and~$\gamma_3$.
The pictures in Figure~\ref{labeled configurations} are re-ordered in the order we will consider them.
We have also redrawn the last configuration more symmetrically.
\begin{figure}
  \scalebox{0.9}{\includegraphics{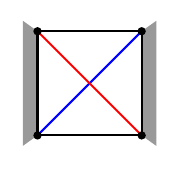}%
  \begin{picture}
    (0,0)(43,-43)
    \put(6,-4){\small\textcolor{red}{$\alpha$}}
    \put(4,12){\small\textcolor{blue}{$\alpha'$}}
    \put(-3,-21){$\gamma_1$}\put(-3,29){$\gamma_2$}
  \end{picture}}
  \scalebox{0.9}{\includegraphics{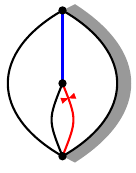}%
  \begin{picture}
    (0,0)(35,-43)
    \put(7,-18){\small\textcolor{red}{$\alpha$}}
    \put(1.5,14){\small\textcolor{blue}{$\alpha'$}}
    \put(-25,-1){\small$\gamma_1$}
  \end{picture}}
  \,\,
  \scalebox{0.9}{\includegraphics{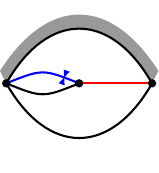}%
  \begin{picture}
    (0,0)(38,-43)
    \put(12,2.5){\small\textcolor{red}{$\alpha$}}
    \put(-16,8){\small\textcolor{blue}{$\alpha'$}}
    \put(-4,-22){\small$\gamma_1$}
  \end{picture}}
  \scalebox{0.9}{\includegraphics{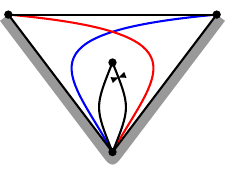}%
  \begin{picture}
    (0,0)(54,-43)
    \put(20,12){\small\textcolor{red}{$\alpha$}}
    \put(-26.5,12){\small\textcolor{blue}{$\alpha'$}}
    \put(-5,36.5){\small$\gamma_1$}
    \put(-14.5,-3){\small$\gamma_2$}
    \put(5,-5){\small$\gamma_3$}
  \end{picture}}
  \caption{Possible local configurations, with more information}
  \label{labeled configurations}
\end{figure}

Our task is simplified by several symmetries.
We have already used the symmetry of changing taggings at a puncture.
Also, any symmetry of a configuration that fixed $\alpha$ and $\alpha'$ or switches $\alpha$ and $\alpha'$ preserves equation~\eqref{D backwards source-sink}.
If the symmetry is orientation-reversing, the absolute value operation in~\eqref{D backwards source-sink} is crucial to the symmetry.
(Note however that this absolute value has been omitted in~\eqref{the task} because we took $\alpha$ to be a source, not a sink.)

We first consider the left picture in Figure~\ref{labeled configurations}.
Since each marked point is on the boundary, there are no relevant taggings.
Contributions to $(\alpha|\beta)$, $(\alpha'|\beta)$, $(\gamma_1|\beta)$, and $(\gamma_2|\beta)$ occur only when $\beta$ intersects the interior of the quadrilateral.
While $\beta$ may intersect the interior of the quadrilateral a number of times, each intersection can be treated separately.
In such an intersection, $\beta$ may either pass through the quadrilateral, terminate at a vertex of the quadrilateral, or connect two vertices of the quadrilateral.
Up to symmetry, as discussed above, there are only three possibilities.
(The relevant symmetry group is the order-$4$ dihedral symmetry group of the rectangle shown.)
Figure~\ref{squarecase} shows the possible intersections of $\beta$ (shown as a dotted line) with the quadrilateral, along with the contributions to $(\alpha|\beta)$, $(\alpha'|\beta)$, $(\gamma_1|\beta)$, and $(\gamma_2|\beta)$.
Each of these is given in the form $A+C+D$.
The quantities $b_{\alpha\gamma_1}$ and $b_{\alpha\gamma_2}$ are both $1$.
In every case, we see that $(\alpha|\beta)=-(\alpha'|\beta)+(\gamma_1|\beta)+(\gamma_2|\beta)$, and therefore \eqref{the task} holds.

\begin{figure}
  $\begin{array}{c|c|c|c|c|}
    &(\alpha|\beta)&(\alpha'|\beta)&(\gamma_1|\beta)&(\gamma_2|\beta)\\\hline
    \raisebox{-15pt}{\includegraphics{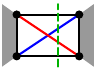}}&1+0+0&1+0+0&1+0+0&1+0+0\\\hline
    \raisebox{-15pt}{\includegraphics{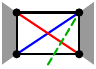}}&1+0+0&0+0+0&1+0+0&0+0+0\\\hline
    \raisebox{-15pt}{\includegraphics{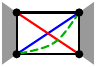}}&1+0+0&0+(-1)+0&0+0+0&0+0+0\\\hline
  \end{array}$
  \caption{}
  \label{squarecase}
\end{figure}

Notice that the second and third pictures of Figure~\ref{labeled configurations} are related by a reflection that switches $\alpha$ with $\alpha'$.
By the symmetry discussed above, we need only consider one of these configurations; we will work with the third picture.
Contributions to $(\alpha|\beta)$, $(\alpha'|\beta)$, and $(\gamma_1|\beta)$ only occur when $\beta$ intersects the digon, and again, we can treat each intersection separately.
Figure~\ref{twotricase} shows all but four of the possible intersections of $\beta$ with the configuration, and shows $(\alpha|\beta)$, $(\alpha'|\beta)$, and $(\gamma_1|\beta)$, again in the form $A+C+D$.
\begin{figure}
  $\begin{array}{c|c|c|c|}
    &(\alpha|\beta)&(\alpha'|\beta)&(\gamma_1|\beta)\\\hline
    \raisebox{-16pt}{\includegraphics{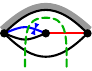}}&1+0+0&1+0+0&2+0+0\\\hline
    \raisebox{-16pt}{\includegraphics{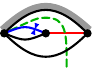}}&1+0+0&0+0+0&1+0+0\\\hline
    \raisebox{-16pt}{\includegraphics{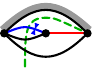}}&0+0+0&1+0+0&1+0+0\\\hline
    \raisebox{-16pt}{\includegraphics{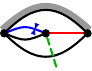}}&0+0+0&0+0+1&1+0+0\\\hline
    \raisebox{-16pt}{\includegraphics{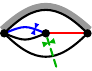}}&0+0+1&0+0+0&1+0+0\\\hline
  \end{array}$
  \caption{}
  \label{twotricase}
\end{figure}
Since $b_{\alpha\gamma_1}=1$, the desired relation is $(\alpha|\beta)=-(\alpha'|\beta)+(\gamma_1|\beta)$, and we see that this relation holds in every case.
Not pictured in Figure~\ref{twotricase} are the four cases where $\beta_0$ coincides with $\alpha_0$ or $\alpha'_0$, with two possible taggings at the point in the center of the digon.
In each of these cases, $(\gamma_1|\beta)=0$ and the $A$ terms of $(\alpha|\beta)$ and $(\alpha'|\beta)$ are both zero.
The other terms are also zero, except that one of $(\alpha|\beta)$ and $(\alpha'|\beta)$ has $C=-1$ and one of $(\alpha|\beta)$ and $(\alpha'|\beta)$ has $D=1$.

Finally, we consider the last picture in Figure~\ref{labeled configurations}.
The two cases where $\beta_0$ coincides with $\alpha_0$ or $\alpha_0'$ are handled analogously to the last case of the quadrilateral condition.
Up to symmetry, there are six remaining cases, pictured in Figure~\ref{digontricase}.
Once again, $b_{\alpha\gamma_i}=1$ for $i\in\set{1,2,3}$, and the desired relation $(\alpha|\beta)=-(\alpha'|\beta)+(\gamma_1|\beta)+(\gamma_2|\beta)+(\gamma_3|\beta)$ holds in every case.
\begin{figure}
  $\begin{array}{c|c|c|c|c|c|}
    &(\alpha|\beta)&(\alpha'|\beta)&(\gamma_1|\beta)&(\gamma_2|\beta)&(\gamma_3|\beta)\\\hline
    \raisebox{-16pt}{\includegraphics{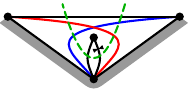}}&2+0+0&2+0+0&2+0+0&1+0+0&1+0+0\\\hline
    \raisebox{-16pt}{\includegraphics{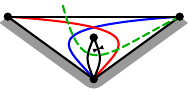}}&2+0+0&1+0+0&1+0+0&1+0+0&1+0+0\\\hline
    \raisebox{-16pt}{\includegraphics{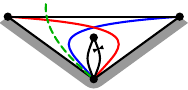}}&1+0+0&0+0+0&1+0+0&0+0+0&0+0+0\\\hline
    \raisebox{-16pt}{\includegraphics{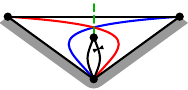}}&1+0+0&1+0+0&1+0+0&0+0+0&0+0+1\\\hline
    \raisebox{-16pt}{\includegraphics{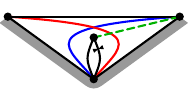}}&1+0+0&0+0+0&0+0+0&0+0+0&0+0+1\\\hline
    \raisebox{-16pt}{\includegraphics{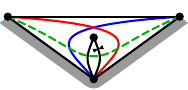}}&1+0+0&1+0+0&0+0+0&1+0+0&1+0+0\\\hline
  \end{array}$
  \caption{}
  \label{digontricase}
\end{figure}

\end{document}